\numberwithin{equation}{section}    
\theoremstyle{plain}
\newtheorem{thm}{Theorem}[section]
\newtheorem{proposition}[thm]{Proposition}
\newtheorem{corollary}[thm]{Corollary}
\newtheorem{question}[thm]{Question}
\def\diag{ \begin{tikzpicture} \draw[dashed] (-.12,-.12) -- (.42, .42); \end{tikzpicture} }
\newcommand{\bmlambda}{{\bm{\lambda}}}
\newcommand{\bmmu}{{\bm{\mu}}}
\newcommand{\bmnu}{{\bm{\nu}}}
\newcommand{\bmT}{{\bm{T}}}
\newcommand{\bmR}{{\bm{R}}}
\newcommand{\QQ}{{\mathbb {Q}}}
\DeclareMathOperator{\maj}{maj}
\DeclareMathOperator{\arm}{arm}
\DeclareMathOperator{\leg}{leg}
\DeclareMathOperator{\coleg}{coleg}
\DeclareMathOperator{\coarm}{coarm}
\DeclareMathOperator{\inv}{inv}
\DeclareMathOperator{\ssyt}{SSYT}
\DeclareMathOperator{\syt}{SYT}
\DeclareMathOperator{\llt}{LLT}
\title[Haglund's conjecture for
multi-$t$ Macdonald polynomials]{Haglund's conjecture for multi-$t$ Macdonald polynomials}
\author{Seung Jin Lee}
\address{Department of Mathematical Sciences \\ Research institute of Mathematics \\ Seoul National University \\ Seoul 151-747 \\ Korea}
\email{lsjin@snu.ac.kr}
\author{Jaeseong Oh}
\address{School of Computational Sciences \\ Korea Institute for Advanced Study \\ Seoul 02445 \\ Korea}
\email{jsoh@kias.re.kr}
\author{Brendon Rhoades}
\address{Department of Mathematics\\ University of California, San Diego\\United States}
\email{bprhoades@math.ucsd.edu}
\begin{document}
\begin{abstract}
    We provide new approaches to prove identities for the modified Macdonald polynomials
    via their LLT expansions. As an application, we prove a conjecture of Haglund concerning the multi-$t$-Macdonald polynomials of two rows.
    \end{abstract}
\maketitle

\section{Introduction}

In his seminal paper \cite{Mac88}, Macdonald introduced the \emph{Macdonald $P$-polynomials} $P_\mu[X;q,t]$ indexed by partitions $\mu$. The \emph{modified Macdonald polynomials} $\widetilde{H}_\mu[X;q,t]$ are a combinatorial version of the Macdonald $P$-polynomials and they are characterized as the 
unique family of symmetric functions 
satisfying the following \emph{triangularity}
and \emph{normalization} axioms (see \cite{HHL05}):
\begin{enumerate}
    \item $\widetilde{H}_\mu[X(1-q);q,t]=\sum_{\lambda\ge\mu} a_{\lambda,\mu}(q,t)s_\lambda(X)$,
    \item $\widetilde{H}_\mu[X(1-t);q,t]=\sum_{\lambda\ge\mu'} b_{\lambda,\mu}(q,t)s_\lambda(X)$, and
    \item $\langle \widetilde{H}_\mu, s_{(n)}\rangle=1$,
\end{enumerate}
for suitable coefficients $a_{\lambda,\mu}, b_{\lambda,\mu}\in\QQ(q,t)$, where $\mu'$ denotes the conjugate partition of $\mu$ and $s_\mu(X)$ is the Schur function. The partial order $\le$ is the
\emph{dominance order} on partitions defined by
\begin{equation*}
    \lambda\le\mu \text{ if } \lambda_1+\cdots+\lambda_k\le\mu_1+\cdots+\mu_k \text{ for all } k,
\end{equation*} $[-]$ denotes the plethystic substitution, and $\langle -,-\rangle$ is the Hall inner product.
Haglund, Haiman,
and Loehr proved \cite{HHL05} 
a combinatorial formula for the modified Macdonald polynomials $\widetilde{H}_\mu[X;q,t]$ which generalizes
to the \emph{multi-$t$ Macdonald polynomials}
$\widetilde{H}_{\mu}[X;q,t_1,t_2,\dots]$.
The polynomial $\widetilde{H}_{\mu}[X;q,t_1,t_2,\dots]$
specializes to $\widetilde{H}_\mu[X;q,t]$ at 
$t_1 = t_2 = \cdots = t$ and
depends on an order $c_1, c_2, \dots$ of the cells of $\mu$.

{\em LLT polynomials} are symmetric functions
$\llt_{\bmnu}[X;q]$ introduced by Lascoux,
Leclerc, and Thibon \cite{LLT97},
which depend  
on a tuple $\bmnu$ of skew partitions. 
The LLT polynomial $\llt_{\bmnu}[X;q]$ is 
{\em unicellular} if every skew partition in $\bmnu$ is a single cell.
Unicellular LLT polynomials are naturally indexed by Dyck paths as well as tuples of skew shapes.

Jim Haglund conjectured \cite{Hag21} 
a combinatorial formula expanding the multi-$t$ Macdonald polynomials indexed by 
two-row partitions $\mu$ into unicellular LLT polynomials.
In this paper, we prove Haglund's conjecture.
In the following theorem, we index LLT polynomials with Dyck paths.

\begin{thm}\label{thm: Haglund's conjecture}
Let $\mu=(n-k,k)$ be a two-row partition and $c_1, \dots,c_k$ be the cells in the upper row. Let $D(h_1,\dots,h_n)$ be the Dyck path of size $n$ whose height of the $j$-th column is $h_j$ for $1\le j \le n$. Then for $k\le h_1\le \cdots\le h_k$ we have, 
\[
\widetilde{H}_\mu[X;q,q^{h_k-k}, q^{h_{k-1}-k}, \dots, q^{h_1-k}] = \llt_{D(h_1,\dots,h_k,n,\dots,n)}[X;q],
\]
where the left-hand side is the multi-$t$-Macdonald polynomial $\widetilde{H}_\mu[X;q,t_{1},t_{2},\dots]$ at $t_{i}=q^{h_i-k}$ for $1\le i \le k$.
\end{thm}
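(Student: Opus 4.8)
The plan is to reduce the claimed equality to a purely LLT-theoretic identity and then prove that identity combinatorially. First, by the Haglund--Haiman--Loehr combinatorial formula in its multi-$t$ form, $\widetilde H_{(n-k,k)}[X;q,t_1,\dots,t_k]$ equals the sum over fillings $\sigma\colon\mu\to\mathbb Z_{>0}$ of $q^{\inv(\sigma)}\bigl(\prod_{i\,:\,c_i\in\des(\sigma)}t_i\bigr)\,x^{\sigma}$, where $\des(\sigma)$ is the set of top cells $c_i$ whose entry exceeds the entry directly below; the $t$-weight of a filling has this simple product shape precisely because every cell in the top row of a two-row diagram has leg $0$. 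Specializing $t_i=q^{h_i-k}$ turns the $t$-weight of each filling into a power of $q$, so the left-hand side becomes $\sum_{\sigma}q^{\,\inv(\sigma)+\sum_{i:\,c_i\in\des(\sigma)}(h_i-k)}\,x^{\sigma}$. The right-hand side $\llt_{D(h_1,\dots,h_k,n,\dots,n)}[X;q]$ is, by the combinatorial definition of a unicellular LLT polynomial, $\sum_{\kappa}q^{\inv_D(\kappa)}x^{\kappa}$, the generating function over labelings $\kappa$ of the $n$ cells of the Dyck path by positive integers weighted by the number of $D$-attacking inversions; the goal is to identify these two generating functions.

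Rather than constructing a single global bijection between the two sides, I would first group the fillings $\sigma$ by their descent set $S\subseteq\{c_1,\dots,c_k\}$. Since $\maj$ is constant on each such group, the standard Haglund--Haiman--Loehr argument yields an expansion $\widetilde H_{(n-k,k)}[X;q,t_1,\dots,t_k]=\sum_{S}\bigl(\prod_{c_i\in S}t_i\bigr)q^{-e(S)}\,\llt_{\bmnu(S)}[X;q]$, in which $\bmnu(S)$ is the tuple of skew shapes obtained from the columns of $\mu$ by splitting each $2$-cell column indexed by $S$ into two single cells and keeping the remaining $2$-cell columns as vertical dominoes, and $e(S)$ is an explicit $q$-shift coming from the arm correction in $\inv$. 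After the substitution $t_i=q^{h_i-k}$, the theorem is equivalent to the LLT identity
\[
\sum_{S\subseteq\{c_1,\dots,c_k\}}q^{\,\sum_{c_i\in S}(h_i-k)-e(S)}\,\llt_{\bmnu(S)}[X;q]\;=\;\llt_{D(h_1,\dots,h_k,n,\dots,n)}[X;q],
\]
and both sides are now ordinary LLT polynomials, so the LLT-expansion techniques from the earlier sections apply.

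I would prove the displayed identity by induction on $k$, resolving one of the $k$ special columns at a time. The inductive step rests on a local ``domino-resolution'' lemma: an LLT polynomial carrying a distinguished vertical domino equals a $q$-weighted sum of two LLT polynomials, one retaining the domino and one replacing it by two single cells that attack a prescribed window of cells to their right, the relevant exponent of $q$ being governed by the height $h_i$ of the corresponding column. Iterating this lemma over the $k$ special columns and collecting terms according to which dominoes were resolved --- that is, according to the subset $S$ --- recovers the left-hand side, while the fully resolved term reassembles $\llt_{D(h_1,\dots,h_k,n,\dots,n)}$. The base case $k=0$ is the classical identity $\widetilde H_{(n)}[X;q]=\llt_{D(n,\dots,n)}[X;q]$, both sides being $\sum_{w}q^{\inv(w)}x^{w}$. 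An essentially equivalent route is to bypass the lemma and build a single weight-preserving bijection on labelings of $D$ that sorts $D$-attacking pairs according to the descent data inherited from $\mu$.

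I expect the domino-resolution lemma --- and especially its compatibility with the two different $\inv$ statistics --- to be the main obstacle. The delicate point is to reconcile the arm correction built into the Haglund--Haiman--Loehr statistic $\inv$ with the plain attacking-inversion count $\inv_D$ on Dyck-path labelings, and to track how this correction interacts with the shift $\sum_{c_i\in S}(h_i-k)$ produced by the specialization. The hypotheses $k\le h_1\le\cdots\le h_k$ enter here in an essential way: they guarantee that $D(h_1,\dots,h_k,n,\dots,n)$ is a genuine Dyck path and that the windows appearing in successive resolutions are nested, which is precisely what makes the local moves glue together into one consistent bijection.
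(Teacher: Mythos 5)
Your reduction of the theorem to a purely LLT-theoretic identity is sound and matches the paper's starting point: the multi-$t$ Macdonald polynomial is by definition (equation \eqref{Eq: def of multi-t-Mac}) a sum over descent sets $S$ of $q^{-\arm(S)}\prod_{c_i\in S}t_i$ times an LLT polynomial indexed by a tuple of dominoes and single cells (one small correction: the tuple entry for a descent column is a \emph{vertical domino}, not two split cells; the non-descent columns give horizontal dominoes), and after specializing $t_i=q^{h_i-k}$ and commuting dominoes to fix the indexing one gets exactly the identity you display. The gap is in the inductive step. The ``domino-resolution lemma'' you lean on --- a local two-term identity resolving one distinguished column at a time with exponent governed by $h_i$ --- does not exist in the form you need, and it is precisely where all the content of the theorem lives. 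Already for $k=1$ the relevant local computation is Proposition~\ref{Prop: horizontal + q^a vertical}: for $\alpha=h_1-k$ one has the LLT-equivalence
\[
R^+_H+q^{\alpha}R^+_V=[\alpha]_q\,(R,C_r)-q[\alpha-1]_q\,(C_r,R),
\]
a \emph{signed} combination of two unicellular tuples that collapses to a single tuple only when $\alpha\in\{0,1\}$. So resolving a column of general height cannot produce the single Dyck-path LLT polynomial $\llt_{D(h_1,\dots,h_k,n,\dots,n)}$ by any positive two-term local move; the tuples of single cells reachable by choosing between $(C_1,C_2)$ and $(C_2,C_1)$ in each column only realize the ``near-staircase'' Dyck paths with $h_j\in\{k+j-1,k+j\}$.

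What the paper does instead, and what is missing from your proposal, is a three-term linear-relation argument. The near-staircase case is the base case (handled exactly by Proposition~\ref{Prop: horizontal + q^a vertical} with $\alpha\in\{0,1\}$), and one then verifies that \emph{both} sides of the identity satisfy the column and row linear relations $q\,f(a)+f(a+2)=[2]_q\,f(a+1)$: on the Macdonald side the column relation is immediate because the weight $q^{h_j-k-j+1}$ is literally $q^{h_j}$ up to a constant, and the row relation follows from the domino commutation of Proposition~\ref{Prop: commuting dominoes}; on the unicellular LLT side these are the known local linear relations of \cite{Lee21,HNY20}. Arbitrary Dyck paths with $k\le h_1\le\cdots\le h_k\le n$ are then reached from near-staircase ones by repeatedly applying these relations. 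Without this machinery (or a genuinely new proof of a resolution lemma strong enough to handle arbitrary $h_i$, which you correctly flag as the main obstacle but do not supply), the argument does not close.
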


This paper is organized as follows. In Section~\ref{Sec: background}, we provide background on the combinatorics of LLT and modified Macdonald polynomials. 
Section~\ref{Sec: A combinatorial proof} contains various equivalences of LLT polynomials indexed
by different families of skew shapes and a proof of Theorem~\ref{thm: Haglund's conjecture} based on these
equivalences.
In Section~\ref{Sec: Stretching}, we explore 
`stretching symmetries' of modified Macdonald polynomials 
which bear formal similarity to
Theorem~\ref{thm: Haglund's conjecture}.
We close in Section~\ref{Sec: Conclusion} with some 
open problems.

\section{background}\label{Sec: background}

\subsection{Combinatorics}
Let $\mu=(\mu_1,\mu_2,\dots,\mu_\ell)$ be a partition of $n$. We identify $\mu$ with its (French)
{\em Young diagram}
\[\mu=\{(i,j)\in \mathbb{Z}_+\times \mathbb{Z}_+ : j\le \mu_i\}\] 
and refer to elements in $\mu$ as {\em cells}.
For a cell $u$ in a partition, 
\begin{itemize}
    \item the {\em content} of $u = (i,j)$ is 
    $c(u) := i-j$,
    \item the \emph{arm} (resp., \emph{coarm}) of $u$ is the number of cells strictly to the right (resp., left) of $u$ in the same row,
    \item  the \emph{leg} (resp., \emph{coleg}) of $u$
is the number of cells strictly above (resp., below) $u$ in the same column, and
\item the {\em major index} of $u$ is the leg of $u$ plus one. 
\end{itemize}
For example, for a partition $\mu=(5,4,3,2)$, and a cell $c=(2,2)$ as in Figure~\ref{fig: partition}, 
\[
    \arm(c)=2, \coarm(c)=1, \leg(c)=1, \coleg(c)=2, \text{ and } \maj(c)=2.
\]
\begin{figure}[h]
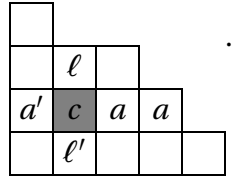

    \centering
    \[
        \begin{ytableau}
          \\
        & \ell &  \\
        a' & *(gray) c & a & a\\
        &  \ell' &  &  &
        \end{ytableau}.
    \]
    \caption{An example of partition}
    \label{fig: partition}
\end{figure}
Let $\textrm{stat}$ be a statistic on cells.
For a subset of cells $D\subseteq \mu$, the number $\textrm{stat}(D)$ is defined by
\begin{align*}
\textrm{stat}(D):=\sum_{u\in D} \textrm{stat}(u).
\end{align*}

For partitions $\lambda$ and $\mu$  with $\mu\subseteq\lambda$, the \emph{skew shape} is the
set-theoretic difference
$\lambda/\mu := \lambda - \mu$. 
 A \emph{ribbon} is an edgewise connected skew shape containing no $2\times 2$ block of cells. Note that the contents of the cells of a ribbon are consecutive integers. The \emph{descent set} of a ribbon $\nu$ is the set of contents $c(u)$ of those cells $u=(i,j)\in\nu$ such that the cell $v=(i-1,j)$ directly below $u$ also belongs to $\nu$. For an interval $I=[r,r+s]:=\{r,r+1,\dots,r+s\}$, 
 there is a one-to-one correspondence between ribbons of content $I$ and subsets $D\subseteq I\setminus\{r\}$ by considering the descent set of each ribbon (we regard
 diagonal translations of ribbons as indistinguishable). 
 We denote the ribbon with a content set $I$ and a descent set $D$ by $R_I(D)$. In particular, 
 for any integer $a$ we use $C_a := R_{\{a\}}(\emptyset)$ to denote a cell of content $a$. 
 For each subset of cells $D\subseteq \{(i,j)\in\mu:1<i\}$ where no cell is in the first row, let $D^{(j)}:=\{i:(i,j)\in D\}$. For a partition $\mu$, and a subset $D$ of $\mu$ without a cell in the first row, $\bmR_\mu(D)$ is a tuple of ribbons defined by
\[
\bmR_\mu(D):=(R_{[1,\mu'_1]}(D^{(1)}),R_{[1,\mu'_2]}(D^{(2)}),\dots).
\]

\subsection{LLT polynomials and modified Macdonald polynomials}\label{Subsec:A combinatorial formula for Macdonald polynomials}
For a skew partition $\nu$, a \emph{semistandard tableau} of shape $\nu$ is a filling of $\nu$ with positive integers where each row is weakly increasing from left to right and each column is strictly increasing from bottom to top. For a tuple $\bmnu=(\nu^{(1)},\nu^{(2)},\dots)$ of skew partitions, a semistandard tableau $\bmT=(T^{(1)},T^{(2)},\dots)$ of shape $\bmnu$ is a tuple of semistandard tableaux where each $T^{(i)}$ is a semistandard tableau of shape $\nu^{(i)}$. The set of semistandard tableaux of shape $\bmnu$ is denoted by $\ssyt(\bmnu)$. For a semistandard tableau $\bmT=(T^{(1)},T^{(2)},\dots)$ of shape $\bmnu$, an \emph{inversion} of $\bmT$ is a pair of cells $u\in\nu^{(i)}$ and $v\in\nu^{(j)}$ such that $T^{(i)}(u)>T^{(j)}(v)$ and either
\begin{itemize}
    \item $i<j$ and $c(u)=c(v)$, or
    \item $i>j$ and $c(u)=c(v)+1$.
\end{itemize}
Denote by $\inv(\bmT)$ the number of inversions in $\bmT$. The \emph{LLT polynomial} $\llt_\bmnu[X;q]$ is defined by
\[
 \llt_\bmnu[X;q]=\sum_{\bmT\in\ssyt(\bmnu)} q^{\inv(\bmT)}X^T.
\]
Here, $X^\bmT:=x_1^{\bmT_1}x_2^{\bmT_2}\cdots$, where $\bmT_i$ is the number of $i$'s in $\bmT$. 

If a tuple $\bmnu$ of skew partitions consists of single cells, the LLT polynomial $\llt_{\bmnu}[X;q]$ is called \emph{unicellular}. Dyck paths can also index unicellular LLT polynomials.
Given a tuple of $n$ cells, index these 
cells in order of increasing content, breaking ties
by indexing from left to right within a given content.
For all $i$, let $h_i$ be the maximal index $j$ such that the $i$-th and $j$-th cell forms an inversion pair, i.e., $i<j$ and either both cells are in the same row or the $j$-th cell is in the next row and strictly to the left of the $i$-th cell. Then the correspondence sends the tuple of cells to a Dyck path, where the height of $i$-th column is given by $h_i$. For example, a tuple of cells in Figure~\ref{AAA} corresponds to the Dyck path of height $(2,4,5,5,5)$.

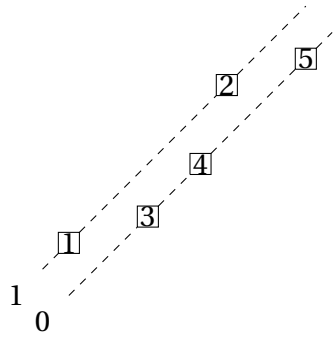
\begin{figure}[h]
\begin{tikzpicture}[scale=0.7]
\draw (0.3-0.5,0.3+0.5)--(0.7-0.5,0.3+0.5)--(0.7-0.5,0.7+0.5)--(0.3-0.5,0.7+0.5)--(0.3-0.5,0.3+0.5);
\draw (1.3,1.3)--(1.7,1.3)--(1.7,1.7)--(1.3,1.7)--(1.3,1.3);
\draw (2.3,2.3)--(2.7,2.3)--(2.7,2.7)--(2.3,2.7)--(2.3,2.3);
\draw (3.3-0.5,3.3+0.5)--(3.7-0.5,3.3+0.5)--(3.7-0.5,3.7+0.5)--(3.3-0.5,3.7+0.5)--(3.3-0.5,3.3+0.5);
\draw (4.3,4.3)--(4.7,4.3)--(4.7,4.7)--(4.3,4.7)--(4.3,4.3);
\draw[dashed] (-0.5,.5)--(-0.2,0.8) (0.2,1.2)--(3.3-0.5,3.3+0.5) (3.7-0.5,3.7+0.5)--(4.5,5.5);
\draw [dashed] (0,0)--(1.3,1.3) (1.7,1.7)--(2.3,2.3) (2.7,2.7)-- (4.3,4.3) (4.7,4.7)--(5,5);
\node[] at (1.5,1.5) {3};
\node[] at (2.5,2.5) {4};
\node[] at (4.5,4.5) {5};
\node[] at (0,1) {1};
\node[] at (3,4) {2};
\node[] at (-1,0) {1};
\node[] at (-1+0.5,-0.5) {0};
\end{tikzpicture}
\caption{Labeling of cells}
\label{AAA}
\end{figure}

Haglund--Haiman--Loehr also provided an expansion of the modified Macdonald polynomials into LLT polynomials indexed by tuples of ribbons.
\begin{thm}\label{thm: LLT expansion of Mac}\cite[Section~3]{HHL05} For a partition $\mu$, we have 
\[
\widetilde{H}_\mu[X;q,t] = \sum_{D} q^{-\arm(D)}t^{\operatorname{maj}(D)}\llt_{\bmR_\mu(D)}[X;q],
\]
where the sum is over all subsets $D\subseteq \{(i, j)\in\mu:1<i\}$.
\end{thm}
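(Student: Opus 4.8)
The plan is to start from the Haglund--Haiman--Loehr monomial formula for $\widetilde{H}_\mu$ and regroup its terms by descent set. Recall that HHL express
\[
\widetilde{H}_\mu[X;q,t] = \sum_{\sigma \colon \mu \to \mathbb{Z}_+} q^{\Inv(\sigma)-\arm(\des(\sigma))}\, t^{\maj(\des(\sigma))}\, X^\sigma,
\]
summed over all fillings $\sigma$ of $\mu$ by positive integers, where $\des(\sigma)\subseteq\{(i,j)\in\mu:1<i\}$ is the set of cells whose entry strictly exceeds that of the cell directly below, $\maj(u)=\leg(u)+1$, and $\Inv(\sigma)$ counts the \emph{attacking} pairs of cells of $\mu$ whose entries are out of order in the reading word. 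First I would partition the fillings according to their descent set $D:=\des(\sigma)$. For fixed $D$ the factor $t^{\maj(\des(\sigma))}=t^{\maj(D)}$ is constant and $q^{-\arm(\des(\sigma))}=q^{-\arm(D)}$ pulls out of the inner sum, reducing the theorem to the per-$D$ identity
\[
\sum_{\sigma\colon \des(\sigma)=D} q^{\Inv(\sigma)}\, X^\sigma = \llt_{\bmR_\mu(D)}[X;q].
\]

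The core of the argument is a weight-preserving bijection between fillings $\sigma$ with $\des(\sigma)=D$ and semistandard tableaux $\bmT\in\ssyt(\bmR_\mu(D))$, which I would construct one column at a time. Send cell $(k,j)$ of $\mu$ to the cell of content $k$ in the $j$-th ribbon $R_{[1,\mu_j']}(D^{(j)})$, transporting its entry. By construction this ribbon bends upward (a vertical step, forcing a strict increase) exactly at the contents in $D^{(j)}=\{i:(i,j)\in D\}$ and steps to the left (a horizontal step, forcing a weak increase) elsewhere. Hence the semistandard condition on the $j$-th ribbon translates precisely into ``$(k,j)$ is a descent of $\sigma$ iff $k\in D^{(j)}$''; since the ribbons are filled independently, $\bmT$ is semistandard iff $\des(\sigma)=D$. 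The monomials agree, $X^\sigma=X^{\bmT}$, so only the inversion statistics remain to be matched.

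It then remains to prove $\Inv(\sigma)=\inv(\bmT)$, which I would verify by pairing up inversions. Under the bijection the LLT content of an image cell records the row index of its preimage, so a same-row attacking pair $\{(k,i),(k,j)\}$ with $i<j$ maps to two cells of equal content $k$ in components $i<j$ of $\bmR_\mu(D)$, realizing the first type of LLT inversion ($c(u)=c(v)$); an attacking pair in consecutive rows with the upper cell $(k,i)$ strictly to the right of the lower cell $(k-1,j)$ (so $i>j$) maps to cells of contents $k$ and $k-1$ in components $i>j$, realizing the second type ($c(u)=c(v)+1$). Checking that the reading-word ordering and the entry comparison $T^{(i)}(u)>T^{(j)}(v)$ single out the same pairs in each case then gives $\Inv(\sigma)=\inv(\bmT)$, and the per-$D$ identity follows. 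Summing over all $D\subseteq\{(i,j)\in\mu:1<i\}$ yields the theorem.

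The step I expect to be the main obstacle is this final matching of statistics, because the two are phrased in genuinely different languages: $\Inv(\sigma)$ lives on the single diagram $\mu$ and is defined through attacking cells and a reading word, whereas $\inv(\bmT)$ lives on a tuple of ribbons and is defined through content comparisons between components. Making the French orientation, the bottom-to-top column indexing, and the two content offsets $c(u)=c(v)$ and $c(u)=c(v)+1$ all consistent --- and confirming that no attacking pair is lost, created, or double-counted at the junctions where a ribbon turns --- is the delicate bookkeeping on which the proof turns.
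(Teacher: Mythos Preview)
The paper does not supply its own proof of this statement; it is quoted directly as a known result from \cite[Section~3]{HHL05}. Your sketch is exactly the argument given there: group the Haglund--Haiman--Loehr monomial formula by descent set $D$, identify fillings with $\des(\sigma)=D$ with semistandard tableaux on $\bmR_\mu(D)$ via the column-to-ribbon map $(k,j)\mapsto$ (cell of content $k$ in the $j$-th ribbon), and verify that attacking inversions of $\sigma$ correspond bijectively to LLT inversions of $\bmT$. The approach and the bookkeeping you describe are correct, and the convention-matching you flag as the obstacle is indeed the only place requiring care.
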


As a generalization of the above LLT expansion (or a combinatorial formula) of  modified Macdonald polynomials, the \emph{multi-$t$ Macdonald polynomials} $\widetilde{H}_\mu[X;q,t_1,t_2\dots]$ is defined as follows: For a partition $\mu$, consider an ordering $c_1, c_2, \dots$ of cells in $\mu$. Then the multi-$t$-Macdonald polynomial is 
\begin{equation}\label{Eq: def of multi-t-Mac}
\widetilde{H}_\mu[X;q,t_1,t_2,\dots] := \sum_{D} q^{-\arm(D)}\prod_{c_i \in D} t_i^{\maj(c_i)}\llt_{\bmR_\mu(D)}[X;q],
\end{equation}
where the sum is over all subsets $D\subseteq \{(i, j)\in\mu:1<i\}$. By specializing each $t$ variable $t_i=t$, the multi-$t$-Macdonald polynomial is just a usual modified Macdonald polynomial.

\section{Proving Haglund's conjecture}\label{Sec: A combinatorial proof}

\subsection{LLT-equivalences}\label{Sec: linear relations of LLT's}

Write $[n]_q:= \frac{1-q^n}{1-q}$ for the $q$-analog
of an integer $n$.
Let $\sum_i a_i(q) \bmnu^{(i)}$ and $\sum_j b_j(q) \bmmu^{(j)}$ be  $\mathbb{N}[q]$-linear combinations of tuples of skew partitions. Following Miller \cite{Mil19}, we say these linear combinations are  \emph{LLT-equivalent} if for every tuple $\bmlambda$ of skew partitions, we have
\[
  \sum_i a_i(q) \llt_{(\bmnu^{(i)},\bmlambda)}[X;q]=\sum_j b_j(q) \llt_{(\bmmu^{(j)},\bmlambda)}[X;q].
\]
Here $(\bmnu,\bmlambda)$ is the tuple obtained by concatenating $\bmnu$ and $\bmlambda$. Abusing notation,
we write
\[
\sum_i a_i(q) \bmnu^{(i)}=\sum_j b_j(q) \bmmu^{(j)},
\]
to indicate LLT-equivalence. 
In this section, we establish a series of LLT-equivalences which are ribbon-analogues of results in \cite{Mil19, Lee21, HNY20}.

We prove some of the LLT-equivalences inductively, 
by 
showing that both sides 
satisfy `linear relations'. To be more precise, we say that a function $f(\alpha)$ of integers $\alpha$ satisfies a \emph{linear relation} in $\alpha$ if we have
\[
    q f(\alpha)+f(\alpha+2)=[2]_q f(\alpha+1).
\]
If a function $f(D)$ is defined for Dyck paths $D$, we say $f$ satisfies a \emph{row linear relation} if we have
\[
    q f(D) + f(D'') = [2]_q f(D'),
\]
where Dyck paths $D$, $D'$, and $D''$ are Dyck paths where they differ at only one row, and the number of boxes in that row below the Dyck path is given by $a$, $a+1$ and $a+2$, respectively. We define \emph{column linear relation} similarly.

Let $R$ be a ribbon of content $[r-1]$ with a descent set $D$. We can add a cell of content $r$ to $R$ to obtain a ribbon of content $[r]$ in two ways; add a cell above or to the left of the cell of content $r-1$. In other words, there are two ribbons $R^+_H:=R_{[r]}(D)$ and $R^+_V:=R_{[r]}(D\cup \{r\})$ obtained by adding a cell to $R$. We used the $+$ sign to mean that one cell is added, and $H$ and $V$ stand for adding a cell horizontally or vertically. 
Our first LLT equivalence is as follows.

\begin{proposition}\label{Prop: horizontal + q^a vertical}
For a ribbon $R$ of content $[r-1]$, we have the following LLT equivalence
\begin{align*}
R^+_H + q^\alpha R^+_V&= [\alpha]_q (R, C_r) - q[\alpha-1]_q (C_r, R)\\
&=\begin{cases}
[\alpha]_q (R, C_r) - q[\alpha-1]_q (C_r, R) &\text{ if $\alpha\ge$1, and}\\
-q^\alpha[-\alpha]_q (R, C_r) + q^\alpha[1-\alpha]_q (C_r, R)&\text{ if $\alpha<$1}\\
\end{cases}
\end{align*}
for all integers $\alpha$.
\end{proposition}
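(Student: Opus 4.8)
The plan is to prove the claimed LLT-equivalence by induction on the number of descents in $R$, with the base case handled by a direct generating-function computation, and to organize the inductive step around the linear relation mechanism sketched just before the statement. First I would reduce to the claim that the function
\[
f(\alpha) := R^+_H + q^\alpha R^+_V - [\alpha]_q (R, C_r) + q[\alpha-1]_q (C_r, R)
\]
(an element of the free module on tuples of skew shapes, or its image after pairing against an arbitrary test shape $\bmlambda$) vanishes identically. The two-case reformulation on the right-hand side is a formal identity in $[\alpha]_q$ valid for all integers $\alpha$ (it just records that $[\alpha]_q = -q^\alpha[-\alpha]_q$), so it carries no content beyond the first line; I would dispatch it in one sentence.

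The main step is to show both sides satisfy the \emph{same} linear relation in $\alpha$, namely $q\,g(\alpha) + g(\alpha+2) = [2]_q\, g(\alpha+1)$, and agree at two consecutive values of $\alpha$ — say $\alpha = 0$ and $\alpha = 1$ — which then forces equality for all integers $\alpha$ by propagating the recursion in both directions (the recursion is invertible since the coefficient of $f(\alpha)$ and of $f(\alpha+2)$ are units in $\mathbb{Q}(q)$). The left-hand side $R^+_H + q^\alpha R^+_V$ trivially satisfies the linear relation because only the coefficient $q^\alpha$ depends on $\alpha$ and $q\cdot q^\alpha + q^{\alpha+2} = (1+q)q^{\alpha+1} = [2]_q q^{\alpha+1}$; likewise $[\alpha]_q$ and $q[\alpha-1]_q$ each satisfy it, using $q[\alpha]_q + [\alpha+2]_q = [2]_q[\alpha+1]_q$, a standard $q$-integer identity. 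So the linear-relation half is essentially free. For the two base values: at $\alpha = 0$ the claim reads $R^+_H + R^+_V = 0 \cdot (R,C_r) - q\cdot[-1]_q (C_r, R) = (C_r, R)$ after simplifying $[0]_q = 0$ and $-q[-1]_q = 1$; at $\alpha = 1$ it reads $R^+_H + q R^+_V = [1]_q(R, C_r) - q[0]_q(C_r,R) = (R, C_r)$. These two identities are the genuine combinatorial input, and I would prove them by the standard content-shift/bijection argument on semistandard fillings: for a fixed filling of the test shape $\bmlambda$, compare inversion statistics of fillings of $(R^+_H,\bmlambda)$, $(R^+_V,\bmlambda)$, $(R,C_r,\bmlambda)$, $(C_r,R,\bmlambda)$, matching up the entry placed in the new cell of content $r$ against the entries of content $r-1, r, r+1$ in $R$ and in $\bmlambda$. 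This is exactly the kind of local "add-a-cell" identity that appears in Miller \cite{Mil19} and its ribbon analogues, so I expect the bijection to be a routine — if slightly fiddly — case analysis on whether the new entry creates an inversion with its neighbor inside $R$.

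I anticipate the main obstacle to be precisely this base-case bookkeeping: because $R$ is a ribbon rather than a single cell, the cell of content $r-1$ to which we attach the new cell can itself be a descent or non-descent bottom/top of its column in $R$, and one must check that the inversion contributions involving cells of content $r-2$ and $r$ already present in $R$ are unaffected by the $H$-versus-$V$ choice, so that the only net difference is the interaction of the new cell with content-$r$ and content-$(r+1)$ cells elsewhere in the tuple. Once that local comparison is set up cleanly, the identities at $\alpha=0,1$ drop out, and the linear-relation propagation finishes the proof for all $\alpha\in\mathbb{Z}$; in particular no separate treatment of $\alpha < 1$ is needed beyond invoking the recursion.
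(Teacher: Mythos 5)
Your proposal matches the paper's proof: the paper likewise verifies the identity at $\alpha=0,1$ and then observes that all three expressions satisfy the linear relation $q\,g(\alpha)+g(\alpha+2)=[2]_q\,g(\alpha+1)$, which propagates the equality to all integers $\alpha$. Your additional observations (the invertibility of the recursion, the formal identity $[\alpha]_q=-q^\alpha[-\alpha]_q$ disposing of the case split, and the explicit base-case identities $R^+_H+R^+_V=(C_r,R)$ and $R^+_H+qR^+_V=(R,C_r)$) are all correct and simply flesh out what the paper leaves as ``easy to see.''
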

\begin{proof}
The formula is easy to see when 
$\alpha=0, 1$. Since each of the three expressions
satisfies a linear relation in $\alpha$, 
the proposition follows. \end{proof}

Our second LLT equivalence is a linear relation for LLT polynomials which generalizes the local linear relation of unicellular LLT polynomials given in \cite{Lee21, HNY20}.

\begin{proposition}\label{Prop: linear relation between LLT's}
For a ribbon $R$ of content $[r-1]$, we have the following LLT equivalence
\[
[k]_q \left(C_r^{k-1},R,C_r\right)= q[k-1]_q \left(C_r^k, R\right)+\left(R, C_r^k\right).
\]
\end{proposition}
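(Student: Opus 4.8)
The plan is to prove Proposition~\ref{Prop: linear relation between LLT's} by induction on $k$, using the linear-relation machinery set up just before Proposition~\ref{Prop: horizontal + q^a vertical}. First I would verify the base cases $k=0$ and $k=1$ directly. For $k=1$ the claimed identity reads $(R,C_r) = (C_r,R)$... which is \emph{not} literally true, so in fact the genuine base case is $k=1$ written as $[1]_q(R,C_r) = q[0]_q(C_r^1,R) + (R,C_r)$, i.e. $0=0$, which is vacuous; the real content starts at $k=2$. So I would treat $k=1$ as trivial and establish $k=2$ by hand: this is exactly a local linear relation of the type appearing in \cite{Lee21,HNY20}, relating $(C_r,R,C_r)$, $(C_r^2,R)$, and $(R,C_r^2)$, and I would prove it by the standard argument of pairing up semistandard fillings of the concatenated tuples $(\ldots,\bmlambda)$ according to the relative order of the two entries sitting in the two copies of $C_r$ (and their interaction with $R$ and with $\bmlambda$), checking that the inversion-statistic bookkeeping produces the coefficients $[2]_q$, $q$, and $1$.

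Next, for the inductive step I would fix the ribbon $R$ and regard each of the three terms as a function of the exponent of the leading/trailing stack of cells $C_r$. Concretely, set
\[
L_k := [k]_q\left(C_r^{k-1},R,C_r\right), \qquad M_k := q[k-1]_q\left(C_r^k,R\right)+\left(R,C_r^k\right),
\]
so the goal is $L_k = M_k$ for all $k\ge 0$. The key observation is that adding one more cell of content $r$ to a vertical stack $C_r^j$ (turning it into $C_r^{j+1}$) is governed by a three-term linear relation in $j$ of exactly the shape $qf(j)+f(j+2)=[2]_qf(j+1)$ — this is the column-analogue of the linear relation that drives the proof of Proposition~\ref{Prop: horizontal + q^a vertical}, and it holds because a column of $r$-content cells behaves, as far as the LLT inversion statistic against any fixed environment is concerned, like the situation already analyzed there. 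Granting that both $L_k$ and $M_k$ satisfy such a linear relation in $k$, two initial values pin down the whole sequence, and the base cases $k=1,2$ (together with the trivial $k=0$) suffice to conclude $L_k=M_k$ for all $k$.

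The main obstacle is justifying that three-term linear relation in $k$ for $L_k$ and for $M_k$ — i.e., showing $qL_k + L_{k+2} = [2]_q L_{k+1}$ and likewise for $M_k$. For $M_k$ this reduces to two facts: that $q[k-1]_q \cdot q + q[k+1]_q \cdot 1 = [2]_q \cdot q[k]_q$ as a polynomial identity in $q$ (routine $q$-integer arithmetic), together with the statement that the family of LLT classes $(C_r^k,R)$ satisfies the column linear relation, and that $(R,C_r^k)$ does too; these last two are instances of the column linear relation whose proof is the genuinely combinatorial step and mirrors the argument for Proposition~\ref{Prop: horizontal + q^a vertical}. For $L_k$ one has to be a bit more careful because the varying stack $C_r^{k-1}$ appears sandwiched before $R$ while a single $C_r$ trails after: I would argue that for the purposes of the inversion statistic the trailing $C_r$ and the ribbon $R$ together constitute a fixed "environment" sitting to the right of the growing column, so that $L_k$ again satisfies the column linear relation in the exponent $k-1$, and then check the $q$-integer prefactor $[k]_q$ is compatible. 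Assembling these, the induction closes; the only place real work happens is the base case $k=2$ and the column-linear-relation lemma, both of which are variations on arguments already present in the cited literature and in the proof of the preceding proposition.
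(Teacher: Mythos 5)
Your base case is sound and matches the paper's: the $k=2$ identity
\[
[2]_q\left(C_r,R,C_r\right)= q \left(C_r^2, R\right)+\left(R, C_r^2\right)
\]
is precisely the local linear relation of \cite{Lee21,HNY20}, and since a cell of content $r$ can only form inversions with the content-$(r-1)$ cell of $R$, the proof of \cite[Theorem~3.5]{Lee21} carries over exactly as you describe. The gap is in your inductive step. The ``column linear relation in $k$'' you want for the families $k\mapsto (C_r^k,R)$, $k\mapsto(R,C_r^k)$, and $k\mapsto(C_r^{k-1},R,C_r)$ does not exist: a relation $qf(k)+f(k+2)=[2]_qf(k+1)$ would equate, after concatenating any fixed $\bmlambda$, LLT polynomials of homogeneous degrees $|R|+k+|\bmlambda|$, $|R|+k+2+|\bmlambda|$, and $|R|+k+1+|\bmlambda|$; since nonzero symmetric functions of distinct degrees are linearly independent, this would force all three to vanish, which they do not. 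Every linear relation actually available in the paper (the parameter $\alpha$ in Proposition~\ref{Prop: horizontal + q^a vertical}, the row and column relations for Dyck paths) varies an exponent in a coefficient $q^\alpha$ or the position of a cell within a configuration of \emph{fixed} size; none varies the number of cells. Moreover, even granting such a recurrence for the classes, your reduction for $M_k$ is algebraically broken: the solution space of $qf(k)+f(k+2)=[2]_qf(k+1)$ is spanned by the constant sequence and $q^k$, so the product of the coefficient sequence $q[k-1]_q$ with a class sequence that is itself a solution acquires a $q^{2k}$ term and is generically \emph{not} a solution. ``Both factors satisfy the recurrence'' does not imply the product does, and the same objection applies to $L_k=[k]_q\left(C_r^{k-1},R,C_r\right)$.

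What is needed for $k>2$ is a different induction, the one the paper imports from \cite[Proof of Theorem~3.4]{HNY20}: one repeatedly applies the two-cell relation above to sub-tuples of the \emph{fixed-size} tuple $\left(C_r^{k-1},R,C_r\right)$, using Proposition~\ref{Prop: commuting relation between a ribbon and a cell} to move single cells of content $r$ past $R$, and telescopes the resulting $q$-integer coefficients to produce $[k]_q$ and $q[k-1]_q$. Keeping your base case but replacing your step in $k$ with that telescoping argument yields a correct proof; as written, your induction does not close.
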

\begin{proof}
The following linear relation is established
in \cite{Lee21}:
\[
[2]_q\left(C_r,C_{r-1},C_r\right)= q \left(C_r^2, C_{r-1}\right)+\left(C_{r-1}, C_r^2\right).
\]
The only difference between this LLT equivalence and ours is that we replaced $C_{r-1}$ with a ribbon $R$ of content $[r-1]$. Since a cell of content $r$ cannot form an inversion pair with a cell of content less than $r-1$, it suffices to care about the cell of content $r-1$ in $R$ (the last cell). The proof of \cite[Theorem 3.5]{Lee21} applies to show
\[
[2]_q\left(C_r,R,C_r\right)= q \left(C_r^2, R\right)+\left(R, C_r^2\right).
\]
Applying this inductively, as in \cite[Proof of Theorem 3.4]{HNY20}, proves the proposition.
\end{proof}

Our third LLT equivalence is a commuting relation between a ribbon and a cell.

\begin{proposition}\label{Prop: commuting relation between a ribbon and a cell}\cite{Mil19}
Let $R$ be a ribbon of content $[r]$ with a descent set $D$. Then we have the following LLT equivalence
\begin{equation}\label{Eq: commuting relation for a ribbon and a cell}
\left(C_r, R\right)=\begin{cases}
q^{-1}\left(R,C_r\right) & \text{  if $r\in D$, and}\\
\left(R,C_r\right) & \text{  otherwise.}
\end{cases}
\end{equation}
\end{proposition}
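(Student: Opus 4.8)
The plan is to reduce this commuting relation to a purely local computation on $\ssyt$, since cells of small content are inert. First I would note that, as in the proof of Proposition~\ref{Prop: linear relation between LLT's}, a cell of content $r$ can only form an inversion pair (in either direction) with cells of content $r-1$, $r$, or $r+1$; hence when we compare $\llt_{(C_r,R,\bmlambda)}[X;q]$ with $\llt_{(R,C_r,\bmlambda)}[X;q]$ for an arbitrary test tuple $\bmlambda$, the only cell of $R$ whose relative position with the extra cell $C_r$ matters is the final cell of $R$, namely the cell of content $r$. So it suffices to understand how swapping the order of the two components $C_r$ (a lone cell of content $r$) and the cell of content $r$ at the end of $R$ affects inversions, with everything else held fixed.

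Next I would set up the bijection on fillings. Fix a semistandard filling of $(R,\bmlambda)$ and a value $b$ in the extra cell $C_r$; on the other side fix the same filling of $R$ and $\bmlambda$ and a value $a$ in $C_r$. Let $c$ denote the entry of the content-$r$ cell of $R$. The inversion contributions split into (i) those internal to $R$ and $\bmlambda$ and between $C_r$ and $\bmlambda$, which I claim are unchanged by the swap because the content-adjacency conditions defining inversions only depend on the contents and on the left/right ordering of tuple components, and when we move $C_r$ from before $R$ to after $R$ the content $r$ of $C_r$ changes which of "$c(u)=c(v)$" versus "$c(u)=c(v)+1$" applies against a $\bmlambda$-cell of content $r$ or $r+1$ — here I would check carefully that these cancel, or more cleanly, observe that $\bmlambda$ being arbitrary we may restrict attention to how $C_r$ pairs with $R$ alone, as in \cite{Mil19}; and (ii) the single pair formed between $C_r$ and the content-$r$ cell of $R$. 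For case (ii): if $C_r$ is the left component ($(C_r,R)$), the pair $(C_r\text{-cell},\, r\text{-cell of }R)$ has $i<j$ and equal content, so it is an inversion iff $a>c$; if $C_r$ is the right component ($(R,C_r)$), the pair $(r\text{-cell of }R,\, C_r\text{-cell})$ has $i<j$ and equal content, so it is an inversion iff $c>b$.

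The cleanest way to finish is a case split on whether $r\in D$, i.e.\ whether the content-$r$ cell of $R$ sits directly above the content-$(r-1)$ cell (a descent) or directly to its right. If $r\notin D$, the content-$r$ cell of $R$ has no cell below it in $R$, so $c$ is unconstrained from below within $R$; the identity map on entries $a=b$ then visibly matches the two generating functions term-by-term after accounting for the single pair, and one gets $(C_r,R)=(R,C_r)$ on the nose. If $r\in D$, the content-$r$ cell lies strictly above the content-$(r-1)$ cell, forcing $c$ to strictly exceed that lower entry; here the standard trick (cf.\ \cite[proof of the commuting relation]{Mil19}) is to use the involution that leaves the filling of $R,\bmlambda$ alone and sends the entry in $C_r$ to... — more precisely, one builds a weight-preserving bijection between fillings of $(C_r,R,\bmlambda)$ and of $(R,C_r,\bmlambda)$ that changes $\inv$ by exactly $1$, which yields the factor $q^{-1}$ (equivalently $q\cdot\llt_{(C_r,R)}=\llt_{(R,C_r)}$). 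I expect the main obstacle to be precisely this last bijection in the descent case: making sure the swap of the $C_r$-entry against the column-strictness constraint coming from the descent produces a net change of exactly one inversion and is genuinely a bijection on $\ssyt$, rather than just on a restricted set of fillings. Since this is exactly the content of Miller's result \cite{Mil19} with $C_{r-1}$ (or a single cell) replaced by a ribbon ending in content $r$ — and the argument there only uses the final cell — I would cite \cite{Mil19} for the combinatorial core and spell out only the reduction "only the last cell of $R$ matters," which is the one genuinely new (though routine) point here.
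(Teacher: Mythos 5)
There is a genuine gap, and it sits exactly at the step you flag as ``routine.'' Your reduction asserts that the only cell of $R$ whose interaction with the extra cell $C_r$ matters is the content-$r$ cell of $R$. This is false: the content-$(r-1)$ cell of $R$ also pairs with $C_r$, but \emph{only when $C_r$ is the right-hand component}. Indeed, with $u$ the lone cell of $C_r$ in $(R,C_r)$ and $v$ the content-$(r-1)$ cell of $R$, we have $i>j$ and $c(u)=c(v)+1$, so this pair is an inversion iff the entry of $C_r$ exceeds the entry of the content-$(r-1)$ cell; in $(C_r,R)$ no such pair exists (neither $c(u)=c(v)$ with $i<j$ nor $c(u)=c(v)+1$ with $i>j$ holds). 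This asymmetric pair is precisely the source of the factor $q^{-1}$ in the descent case, and omitting it breaks both of your case analyses. In the non-descent case your claim that the identity map matches the two sides ``on the nose'' is incorrect: writing $\alpha$ for the entry of $C_r$, $\beta$ for the content-$r$ entry of $R$ and $\gamma$ for the content-$(r-1)$ entry (so $\beta\le\gamma$), the relevant inversion counts are $\chi(\alpha>\beta)$ on the $(C_r,R)$ side but $\chi(\beta>\alpha)+\chi(\alpha>\gamma)$ on the $(R,C_r)$ side, and these disagree under the identity map (e.g.\ $\beta<\alpha\le\gamma$ gives $1$ versus $0$). In the descent case you never construct the bijection at all --- the proposal literally trails off at ``sends the entry in $C_r$ to\dots'' --- and it cannot be built from the single content-$r$ pair alone.

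The paper's proof supplies exactly what is missing: it fixes all entries of $R$ of content less than $r-1$, tracks the three entries $a$ (lone cell), $b$ (content-$r$ cell of $R$), $c$ (content-$(r-1)$ cell of $R$), tabulates the inversion contributions of \emph{both} pairs (the equal-content pair and the $c(u)=c(v)+1$ pair) in each of the four relative orders compatible with $b>c$, and then defines the bijection to be the identity on entries in two of the cases and the transposition $a\leftrightarrow b$ in the other two. The same entry-swapping device is what rescues the non-descent case as well. So the correct fix for your write-up is not to cite \cite{Mil19} for a ``combinatorial core'' reached after discarding the content-$(r-1)$ cell, but to keep that cell in the local picture and carry out the three-entry bijection explicitly.
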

\begin{proof}

To prove that two linear combinations of tuples of skew partitions are LLT equivalent, it suffices to show that there is a bijection preserving weight, inversion, and content between semistandard tableaux corresponding to those. Since a similar argument also proves the second case, we only prove the first case. 

For semistandard tableaux of shape $(C_r, R)$ and $(R,C_r)$ are the followings tableaux with either of the following four conditions holds: $a>b>c$, $a=b>c$, $b>a>c$, or $b>c\ge a$.
\begin{center}
\ytableausetup{nosmalltableaux}
\ytableausetup{nobaseline}
$T$=\begin{ytableau}
    \none & \none & \none[\diag] & $b$ & \none[\diag]\\
    \none &\none[\diag] &\none[\diag] & $c$ & \none[\diag]\\
    \none[\diag] & $a$ &\none[\diag] & \none[\diag]
\end{ytableau},\qquad
$T'$=\begin{ytableau}
    \none & \none & \none[\diag] & $a$ & \none\\
    \none & \none[\diag] & \none[\diag] & \none[\diag] \\
    \none[\diag] & $b$ &\none[\diag] & \none[\diag]\\
    \none[\diag] & $c$ &\none[\diag]
\end{ytableau}
\end{center}
We let the bijection preserve the fillings of the cells of $R$ of content less than $r-1$, so we omit those cells. In Table~\eqref{Table 1}, we give the $q$-weight of each side of \eqref{Eq: commuting relation for a ribbon and a cell}, $\inv(T)$ and $\inv(T')-1$ for each case.
\begin{equation}\label{Table 1}
    \begin{tabular}{|c|c|c|c|c|c|}
    \hline
         &  $a>b>c$ & $a=b>c$ & $b>a>c$ & $b>c\ge a$ \\
    \hline
        $T$ & 1  & 0 & 0 & 0 \\
    \hline
    $q^{-1}T'$ & 0 & 0 & 1 & 0 \\ \hline
    \end{tabular}
\end{equation}

Let us define a bijection sending $T$ to $T'$ if $a,b$ and $c$ satisfies the second or the fourth case ($a=b>c$ or $b>c\ge a$), and sending $T$ to a tableau obtained from $T'$ by switching $a$ and $b$ otherwise ($a>b>c$ or $b>a>c$). By definition of the bijection, it is obviously weight and content preserving, and \eqref{Table 1} shows that it is also inversion preserving.
\end{proof}

For the last, we need one more commuting relation between dominoes given in \cite{Lee21}.

\begin{proposition}\label{Prop: commuting dominoes}\cite[Lemma 5.5]{Lee21}
Let $V$ and $H$ be vertical and horizontal dominoes of content $[r,r+1]$, respectively. Then we have the following LLT equivalence 
\[
 (V,H) = (H,V).
\]
\end{proposition}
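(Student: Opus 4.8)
The plan is to prove the commuting relation $(V,H)=(H,V)$ by an explicit weight-, content-, and inversion-preserving bijection on semistandard tableaux, exactly in the style of the proof of Proposition~\ref{Prop: commuting relation between a ribbon and a cell}. Since this is cited as \cite[Lemma 5.5]{Lee21}, I would first recall the setup there and transcribe it to the ribbon/LLT-equivalence language used in this paper. A vertical domino $V$ occupies two cells of contents $r$ and $r+1$ (the top cell having the larger content $r+1$), while a horizontal domino $H$ occupies two cells of contents $r$ and $r+1$ (the right cell having content $r+1$). A semistandard filling of $(V,H)$ is thus a quadruple of entries $(v_r, v_{r+1} \mid h_r, h_{r+1})$ subject to $v_r < v_{r+1}$ (column-strict) and $h_r \le h_{r+1}$ (row-weak), and similarly a filling of $(H,V)$ is a quadruple subject to $h_r \le h_{r+1}$ and $v_r < v_{r+1}$. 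So the two tableau sets are literally the same set of quadruples; the content is the same; the only thing that can differ is $\inv$, since the inversion conditions depend on the order in which $V$ and $H$ appear in the tuple.

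The key step is to compare $\inv$ on the two sides. Recall from the definition that for a tuple $\bmnu = (\nu^{(1)}, \nu^{(2)}, \dots)$, a pair $u \in \nu^{(i)}, w \in \nu^{(j)}$ is an inversion when $T^{(i)}(u) > T^{(j)}(w)$ and either $i<j$ with $c(u)=c(w)$, or $i>j$ with $c(u)=c(w)+1$. For the tuple $(V,H)$: the cross-component inversion pairs between $V$ (component $1$) and $H$ (component $2$) are: same content $r$: contributes if $v_r > h_r$; same content $r+1$: contributes if $v_{r+1} > h_{r+1}$; also the pair with $c(u)=c(w)+1$ requires $i>j$, which does not happen here since $V$ is the earlier component. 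Wait — I must be careful: since $i<j$ here, only the $c(u)=c(w)$ condition applies, so the contributions are $[v_r>h_r] + [v_{r+1}>h_{r+1}]$. For $(H,V)$: now $H$ is component $1$, $V$ is component $2$; with $i<j$ and $c(u)=c(w)$ we get contributions $[h_r > v_r] + [h_{r+1}>v_{r+1}]$; with $i>j$ (taking $u\in V$, $w\in H$) and $c(u)=c(w)+1$ we get the pair $(v_{r+1}, h_r)$ contributing $[v_{r+1}>h_r]$. Since within each component the fillings are unchanged, the internal inversions (the single descent-or-not inside each domino) are the same on both sides; so the bijection I want is: send the quadruple to the same quadruple, and check that the cross-component inversion counts agree, OR, if they do not agree for some quadruple, apply a correcting involution (swapping certain entries) to fix it. I expect a straightforward case analysis on the relative order of $v_r, v_{r+1}, h_r, h_{r+1}$ — at most a handful of cases — will show the identity bijection already preserves $\inv$, or that one needs the same entry-swapping trick used in Proposition~\ref{Prop: commuting relation between a ribbon and a cell}.

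Concretely I would tabulate, for each linear order of the multiset $\{v_r, v_{r+1}, h_r, h_{r+1}\}$ consistent with $v_r<v_{r+1}$ and $h_r\le h_{r+1}$, the value of $\inv$ on $(V,H)$ and on $(H,V)$, display this in a table analogous to Table~\eqref{Table 1}, and read off the matching. If the counts already match term-by-term the identity map finishes it; if not, the table will reveal which pairs of cases must be swapped, and since such a swap is visibly weight- and content-preserving and, by the table, inversion-preserving, the bijection is complete. This then establishes $\ssyt(V,H)$ and $\ssyt(H,V)$ generate the same generating function after appending any $\bmlambda$ — note the argument is insensitive to $\bmlambda$ because cells of $\bmlambda$ have the same inversion interaction with $V$ and with $H$ given they occupy the same contents $\{r,r+1\}$ — yielding the claimed LLT equivalence.

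The main obstacle, such as it is, is bookkeeping: making sure the inversion conventions ($i<j$ with equal content versus $i>j$ with content differing by one) are applied in the correct direction for each ordering of the two components, and confirming that the interaction with an arbitrary appended tuple $\bmlambda$ genuinely cancels (it does, because $V$ and $H$ occupy the identical set of contents, so every external cell sees the two dominoes identically up to the internal entry data, which is preserved). Since the statement is already proved in \cite{Lee21}, I would keep the write-up short, essentially citing the reference and sketching the bijection-plus-table for completeness.
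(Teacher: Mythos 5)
The paper itself offers no proof of this proposition beyond the citation to \cite[Lemma 5.5]{Lee21}, so your plan of ``cite the reference and sketch the bijection for completeness'' matches what the paper actually does. However, the portion of the bijection you did work out contains concrete errors that would derail the case analysis. First, with this paper's convention $c(i,j)=i-j$, the cell of content $r+1$ in the horizontal domino $H$ is the \emph{left} cell, so semistandardness gives $h_{r+1}\le h_r$, not $h_r\le h_{r+1}$. Second, and more seriously, your inversion count for $(V,H)$ is wrong: you claim the $i>j$, $c(u)=c(v)+1$ pairs ``do not happen since $V$ is the earlier component,'' but in such a pair $u$ may be taken in the \emph{later} component $H$; the pair consisting of the content-$(r+1)$ cell of $H$ and the content-$r$ cell of $V$ satisfies $i>j$ and $c(u)=c(v)+1$, so it contributes $[h_{r+1}>v_r]$ to $\inv(V,H)$. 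With the corrected counts, the identity map does \emph{not} preserve inversions: for $v_r=1$, $v_{r+1}=2$, $h_{r+1}=3$, $h_r=4$ one finds $\inv(V,H)=1$ but $\inv(H,V)=2$. So the ``identity bijection'' branch of your plan fails, and a nontrivial entry-swapping bijection is genuinely required.

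The second gap is your treatment of the appended tuple $\bmlambda$. It is not enough that $V$ and $H$ ``occupy the same set of contents'': a cell of $\bmlambda$ interacts with the two dominoes through the \emph{multiset of entries at each content} $r$ and $r+1$ among the four domino cells. Since the identity map fails, your correcting bijection must move entries around, and unless it preserves the entry multiset at content $r$ and at content $r+1$ separately, it will change inversions with $\bmlambda$ even while fixing the internal count. This constraint sharply limits the allowed swaps (only exchanging the two entries of a fixed content between the two dominoes) and is exactly where the real work of \cite[Lemma 5.5]{Lee21} lies; your sketch does not identify it. The overall strategy --- a weight-, content-, and inversion-preserving bijection tabulated case by case, as in the proof of Proposition~\ref{Prop: commuting relation between a ribbon and a cell} --- is the right one, but as written the proposal would not compile into a correct proof without repairing both the inversion bookkeeping and the $\bmlambda$-compatibility argument.
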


\subsection{Proof of Haglund's conjecture}\label{Sec: a proof of Haglund's conjecture}
Let $\mu=(n-k,k)$. Note that for a subset $D\subseteq \{ (2,j) \in \mu \}$, $\textrm{arm}(D)=\sum_{(2,j)\in D} (k-j)$. Therefore, the multi-$t$-Macdonald polynomial is given by
\[
\widetilde{H}_\mu[X;q,q^{h_k-k}, q^{h_{k-1}-k}, \dots, q^{h_1-k}]=\sum_D q^{\sum_{(2,j)\in D} h_{k+1-j}-2k+j}\llt_{\bmR_\mu(D)}[X;q],
\]
where the sum is over all subsets $D\subseteq \{ (2,j) \in \mu \}$. For a subset $D\subseteq \{ (2,j) \in \mu \}$, we define 
\[
    D^{rev}=\{(2,k+1-j):(2,j)\in D\}.
\]
For example, let $\mu=(7,5)$ and
\[
D=\{(2,1),(2,2),(2,4)\}=
        \begin{ytableau}
        *(black) & *(black) &  & *(black) &\\
        &  &  &  & & & 
        \end{ytableau}.
\]
Then we have
\[
    D^{rev}=\{(2,2),(2,4),(2,5)\}=
        \begin{ytableau}
        *(white) & *(black) &  & *(black) & *(black) \\
        &  &  &  & & & 
        \end{ytableau}.
\]
The tuple of ribbons corresponding to $D$ and $D^{rev}$ are
\[
    \bmR_\mu(D)=(V,V,H,V,H,C,C) \quad \text{ and } \quad \bmR_\mu(D^{rev}) = (H,V,H,V,V,C,C),
\]
where $V$ and $H$ are vertical and horizontal dominoes (of content 1,2) and $C$ is a cell of content 1. Since Proposition~\ref{Prop: commuting dominoes} says that we can swap the vertical and horizontal dominoes ($V$'s and $H$'s), 
\[
\llt_{\bmR_\mu(D)}[X;q]=\llt_{\bmR_\mu(D^{rev})}[X;q].
\]
Thus we have
\begin{align}\label{def:tworowH}
\widetilde{H}_\mu[X;q,q^{h_k-k}, q^{h_{k-1}-k}, \dots, q^{h_1-k}]=\sum_{D}q^{\sum_{(2,j)\in D} h_{j}-k-j+1}\llt_{\bmR_\mu(D)}[X;q].
\end{align}
Our goal is to show that the right-hand side of \eqref{def:tworowH} is the same as $\llt_{D(h_1,\dots,h_k,n,\dots,n)}[X;q]$.

There are two steps for proving this identity using induction. The first step is to show the identity for ``near-staircase" shapes, namely, the case when $h_j=k+j-1+e_j$ for $j=1,\dots,k$, where $e_j$ is either 0 or 1. These partitions are conjugate to partitions containing $(n-k-1,n-k-2,\dots,n-2k)$ and contained in $(n-k,n-k-1,\dots,n-2k+1)$. Proof of the first step is straightforward. When $h_j=k+j-1+e_j$, we have $h_j-k-j+1=e_j$ so by Proposition~\ref{Prop: horizontal + q^a vertical}, (\ref{def:tworowH}) becomes
\[
\sum_D q^{\sum_{c_j \in D}e_j} \llt_{R_\mu(D)}[X;q]= \llt_{(P_{e_1},P_{e_2},\dots,P_{e_k},C_1,C_1,\dots,C_1)}[X;q],
\]
where $P_{1} = (C_1,C_2)$ and $P_0=(C_2,C_1)$ and there are $n-2k$ number of $C_1$'s in total.

The second step is to show that (\ref{def:tworowH}) satisfies both column and row linear relations. Proving the column linear relation is trivial by the formula since a function $q^h_j$ is linear in $h_j$. Therefore, one only needs to prove the row linear relation. Let $f(h_1,\cdots,h_k)$ be the right-hand side of equation (\ref{def:tworowH}). Then we need to show that for any positive integer $i\in [k]$ and $a\leq n$ satisfying $h_{i-1}\leq a$ and $h_{i+2}\geq a+1$, the following holds:
\begin{align}\label{eq:rowlinear1}
 q\cdot f(h_1,\dots,h_{i-1},a,a,h_{i+2},\dots,h_k) +f(h_1,\dots,h_{i-1},a+1,a+1,h_{i+2},\dots,h_k)\\ =
[2]_q f(h_1,\dots,h_{i-1},a,a+1,h_{i+2},\dots,h_k) \label{eq:rowlinear2}
\end{align}
When $(2,i)$ and $(2,i+1)$ are both in $D$, or neither of them is in $D$, the corresponding sum $\sum_{D}q^{\sum_{(2,j)\in D} h_{j}-k-j+1}\llt_{\bmR_\mu(D)}[X;q]$ satisfies (\ref{eq:rowlinear1})=(\ref{eq:rowlinear2}).

When exactly one of $(2,i),(2,i+1)$ is in $D$, the corresponding function
for (\ref{eq:rowlinear1}) is
\begin{align*}
\sum_{D}q^{1+\chi(i,D)(a-k-i+1)+\chi(i+1,D)(a-k-i)}q^{\sum_{(2,j)\in D, j\not= i,i+1} h_{j}-k-j+1}\llt_{\bmR_\mu(D)}[X;q]\\
+\sum_{D}q^{\chi(i,D)(a-k-i+2)+\chi(i+1,D)(a-k-i+1)}q^{\sum_{(2,j)\in D, j\not= i,i+1} h_{j}-k-j+1}\llt_{\bmR_\mu(D)}[X;q]\\
=2[2]_qq^{a-k-i+1}\sum_{D'}q^{\sum_{(2,j)\in D, j\not= i,i+1} h_{j}-k-j+1}\llt_{\bmR_\mu(D')}[X;q]
\end{align*}
where $\chi(i,D)$ is $1$ if $(2,i)\in D$, 0 otherwise, and the sum in the last line runs over all $D'\subset \{(2,j)\in \mu\}$ satisfying $(2,i)\in D'$ and $(2,i+1) \notin D'$. Here we applied Proposition \ref{Prop: commuting dominoes}.

By a similar argument, one can prove
\[2q^{a-k-i+1}\sum_{D'}q^{\sum_{(2,j)\in D, j\not= i,i+1} h_{j}-k-j+1}\llt_{\bmR_\mu(D')}[X;q]\]
is the same as (\ref{eq:rowlinear2}), proving the claim.

We show that proving the two steps is enough to show Haglund's conjecture using induction, where the base case is the conjecture for near-staircase shapes.

By the induction hypothesis, Haglund's conjecture is true for a Dyck path $D(h'_1,h'_2,\ldots,h'_k)$ where $h'_1=k$ or $k+1$, and $k+1\leq h'_i \leq n$ for any $1<i$. Assume that we have any Dyck path $D(h_1,\ldots,h_k)$ satisfying $k\leq h_i \leq n$. There are two cases: 
\begin{enumerate}
    \item $h_2\geq k+2$. In this case, since we know that Haglund's conjecture holds for $h_1=k$ and $k+1$, it holds for any $k\leq h_1 \leq h_2$ by using the fact that both sides of Haglund's conjecture satisfy the column linear relation.
    \item $h_2=k$ or $k+1$. In this case, consider the largest index $i$ satisfying $h_i=k$. If there is no $i$ or $i=1$, the corresponding two Dyck paths satisfy Haglund's conjecture by an induction. If $i>1$, then we use two previous paths and the row linear relation to show that Haglund's conjecture holds for any $i$.
\end{enumerate}

\section{Stretching symmetries}\label{Sec: Stretching}

The starting point for this project was observing a certain `stretching symmetry'
satisfied by the modified Macdonald polynomials $\widetilde{H}_{\mu}[X;q,t]$.
After the authors posted an earlier version of this paper was posted to the arxiv, Mark Haiman
\cite{Hai22} informed the authors that this observation follows from the work
of Garsia and Tesler \cite{GT96}. We reproduce this derivation here.

Given a partition $\mu = (\mu_1, \mu_2, \dots )$, let $k \mu$ be the partition
$k \mu = (k \mu_1, k \mu_2, \dots )$ obtained by multiplying each part of $\mu$
by $k$.
This stretching operation on partitions lifts to the following symmetry of 
the modified Macdonald polynomials at $t=q^k$.

\begin{thm}\label{Theorem:  for general shape}\cite{GT96}
Let $\mu$ be a partition and $k$ be a positive integer. Then we have
\[\widetilde{H}_{k\mu}[X;q,q^k]=\widetilde{H}_{k\mu'}[X;q,q^k].\]
\end{thm}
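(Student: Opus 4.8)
## Proof proposal for Theorem~\ref{Theorem: for general shape}

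The plan is to exploit the combinatorial formula of Haglund--Haiman--Loehr (Theorem~\ref{thm: LLT expansion of Mac}) together with the conjugation symmetry of LLT polynomials. Recall that for any partition $\mu$,
\[
\widetilde{H}_\mu[X;q,t] = \sum_{D\subseteq\{(i,j)\in\mu\,:\,1<i\}} q^{-\arm(D)}t^{\maj(D)}\,\llt_{\bmR_\mu(D)}[X;q].
\]
I would first recall the basic fact that conjugation of shapes acts on LLT polynomials in a controlled way: transposing each skew shape in a tuple $\bmnu$ (and reversing the order of the tuple) sends $\llt_{\bmnu}[X;q]$ to $\llt_{\bmnu^*}[X;q^{-1}]$ up to a power of $q$ determined by the total content spread, and then applying the $\omega$ involution recovers a genuine identity of symmetric functions. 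For the purposes of this theorem, the cleaner route is to work with the known symmetry $\widetilde{H}_{\mu'}[X;q,t] = \widetilde{H}_\mu[X;t,q]$ of modified Macdonald polynomials (axioms (1)--(3) are interchanged under $q\leftrightarrow t$ and conjugation), which is classical. Under this symmetry, the claimed identity $\widetilde{H}_{k\mu}[X;q,q^k] = \widetilde{H}_{k\mu'}[X;q,q^k]$ becomes equivalent to $\widetilde{H}_{k\mu}[X;q,q^k] = \widetilde{H}_{(k\mu)'}[X;q,q^k]$, i.e. to the statement that at the specialization $t=q^k$ the polynomial $\widetilde{H}_{k\mu}$ is invariant under replacing the shape $k\mu$ by its conjugate $(k\mu)' = k'\cdot(\text{something})$; but note $(k\mu)'$ is \emph{not} of the form $k\nu$ in general, so I cannot simply invoke the symmetry twice. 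Instead I would compare $\widetilde{H}_{k\mu'}[X;q,q^k] = \widetilde{H}_{\mu'k^{\text{th}}\ldots}$ — more carefully: apply $\mu'$-symmetry to get $\widetilde{H}_{k\mu'}[X;q,q^k]=\widetilde{H}_{(k\mu')'}[X;q^k,q]$, and separately $\widetilde{H}_{k\mu}[X;q,q^k]$, and show both equal a common object.

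The concrete strategy I favor follows Garsia--Tesler \cite{GT96}. The idea is that the operator-theoretic / plethystic characterization of $\widetilde{H}_\mu$ via the $\nabla$-type operators or via Macdonald's original $P_\mu$ specializes nicely at $t=q^k$: one shows that the Macdonald $P$-polynomial $P_{k\mu}[X;q,q^k]$ and $P_{k\mu'}[X;q,q^k]$ are related by the principal specialization identities coming from the $(q,t)$-deformed inner product, because at $t=q^k$ the normalization constants $c_\mu(q,t)$ and $c'_\mu(q,t)$ (the arm/leg hook products $\prod_{u\in\mu}(1-q^{\arm(u)}t^{\leg(u)+1})$ and $\prod(1-q^{\arm(u)+1}t^{\leg(u)})$) satisfy, after stretching $\mu\mapsto k\mu$, a symmetry under $\mu\leftrightarrow\mu'$: each cell of $k\mu$ in the ``stretched'' region contributes hook data that matches, under conjugation, the hook data of $k\mu'$ once $t$ is set to $q^k$. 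Thus the plan in order is: (i) write $\widetilde{H}_{k\mu}$ in terms of $J_{k\mu}$ (the integral form Macdonald polynomial) via $\widetilde{H}_\mu[X;q,t]=t^{n(\mu)}J_\mu\!\left[\tfrac{X}{1-t^{-1}};q,t^{-1}\right]$; (ii) invoke the known behavior of $J_{k\mu}$ under the specialization $t=q^k$, namely that $J_{k\mu}[X;q,q^k]$ is, up to an explicit scalar, symmetric in $\mu\leftrightarrow\mu'$, which is exactly the content of Garsia--Tesler's computation with the $(q,t)$ hook products; (iii) check the scalar factors $t^{n(\mu)}$ and the plethystic shift $1-t^{-1}$ match on both sides after setting $t=q^k$ and conjugating.

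The main obstacle I anticipate is step (ii): making precise and justifying the claim that $J_{k\mu}[X;q,q^k]$ transforms symmetrically under conjugation. This requires carefully matching, cell-by-cell, the hook-length data of $k\mu$ against that of $k\mu'$ — the key combinatorial lemma is that for $u\in\mu$ the arm $a$ and leg $\ell$ in $\mu$ become arm $ka+(k-1)$-ish and leg $k\ell + (k-1)$-ish in $k\mu$ (the exact off-by-constants depend on position within the $k\times 1$ block each cell of $\mu$ expands into), and that $\prod_{u\in k\mu}(1-q^{\arm(u)}q^{k\leg(u)+k})$ is term-for-term equal to the corresponding product for $k\mu'$ because $q^{\arm}t^{\leg}\big|_{t=q^k}=q^{\arm+k\leg}$ is a \emph{symmetric} function of $(\arm,\leg)$ up to the global stretch. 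I would isolate this as a standalone combinatorial lemma, prove it by the block decomposition of $k\mu$, and then feed it into the Garsia--Tesler normalization formula to conclude. An alternative, if the integral-form bookkeeping proves unwieldy, is to instead verify the identity directly on the defining triangularity axioms (1)--(3) for $\widetilde{H}$: show that $\widetilde{H}_{k\mu'}[X;q,q^k]$ satisfies axioms (1) and (2) for $\mu=k\mu$ at $t=q^k$ (using that at $t=q^k$ the two plethystic conditions $X(1-q)$ and $X(1-t)=X(1-q^k)$ become comparable via $(1-q^k)=(1-q)[k]_q$), together with the normalization, and invoke uniqueness — but pinning down the triangularity over the dominance order after the specialization collapses many coefficients is delicate, so I expect the Garsia--Tesler route to be the more robust one.
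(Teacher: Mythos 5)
There is a genuine gap in your main route, and it lies precisely at your step (ii). You propose to deduce the result from a cell-by-cell matching of the \emph{hook} data (arm/leg products $c_\mu$, $c'_\mu$) of $k\mu$ against $k\mu'$ at $t=q^k$. Even granting that these products coincide, that only establishes equality of the scalar normalization constants relating $J$ to $P$; it says nothing about whether the two symmetric functions $J_{k\mu}[X;q,q^k]$ and $J_{k\mu'}[X;q,q^k]$ agree. Your phrasing ``invoke the known behavior of $J_{k\mu}$ under the specialization $t=q^k$, namely that $J_{k\mu}[X;q,q^k]$ is, up to an explicit scalar, symmetric in $\mu\leftrightarrow\mu'$'' is essentially a restatement of the theorem to be proved, not a citable input, and the combinatorial lemma you propose to isolate (arms and legs under stretching) would not supply it. The alternative you sketch (re-verifying the triangularity axioms after specialization) is, as you yourself note, not carried out and is genuinely delicate, since dominance triangularity degenerates at $t=q^k$.

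The relevant Garsia--Tesler input is a different invariant: the \emph{biexponent generator} $B_{\lambda}(q,t):=\sum_{c\in\lambda}q^{\coarm_{\lambda}(c)}t^{\coleg_{\lambda}(c)}$, i.e.\ coarm/coleg data rather than arm/leg data. Theorem 1.1 of \cite{GT96} gives the implication that $B_{\lambda}(q^r,q^s)=B_{\nu}(q^r,q^s)$ forces $\widetilde{H}_{\lambda}[X;q^r,q^s]=\widetilde{H}_{\nu}[X;q^r,q^s]$; this is the nontrivial engine that converts a one-variable combinatorial coincidence into an identity of symmetric functions, and it is what your proposal is missing. Once it is in hand, the remaining check is elementary: each cell of $\mu$ with coarm $a$ and coleg $l$ expands in $k\mu$ into $k$ cells with coarms $ka,\dots,ka+k-1$ and coleg $l$, so
\[
B_{k\mu}(q,q^k)=[k]_q\sum_{c\in\mu}q^{k(\coarm_{\mu}(c)+\coleg_{\mu}(c))},
\]
which is manifestly invariant under $\mu\leftrightarrow\mu'$ since conjugation swaps coarm and coleg. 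I recommend you replace your step (ii) with this statement of the Garsia--Tesler theorem and the displayed computation; the integral-form and hook-product bookkeeping can then be discarded entirely.
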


\begin{proof}
    For any partition $\lambda$, let $B_{\lambda}(q,t)$ be the polynomial
    \[
    B_{\lambda}(q,t) := \sum_{c \in \lambda} q^{\coarm_{\lambda}(c)} t^{\coleg_{\lambda}(c)}.
    \]
    For any two partitions $\lambda, \nu$, Theorem 1.1 of \cite{GT96} yields the 
    implication
    \begin{equation}
        \label{garsia-tesler-implication}
        B_{\lambda}(q^r,q^s) = B_{\nu}(q^r,q^s) \quad \Rightarrow \quad
        \widetilde{H}_{\lambda}[X;q^r,q^s] = \widetilde{H}_{\nu}[X,q^r,q^s]
    \end{equation}
    for any positive integers $r, s$. Since $B_{k \mu}(q,q^k) = B_{k \mu'}(q,q^k)$,
    the result follows.
\end{proof}

Like Theorem~\ref{thm: Haglund's conjecture}, 
Theorem~\ref{Theorem:  for general shape} concerns specializing modified Macdonald polynomials
by sending $t$ to a power of $q$.
In a forthcoming paper of the first and second named authors with Donghyun Kim,
a different proof \cite{KLO22+}
of Theorem~\ref{Theorem:  for general shape} will be given.
This proof will show that the equality of $B$-polynomials
$B_{\lambda}(q,q^k) = B_{\nu}(q,q^k)$ is equivalent to the equality 
$\widetilde{H}_{\lambda}[X;q,q^k] = \widetilde{H}_{\mu}[X;q,q^k]$ of
specialized Macdonald 
polynomials (so that the converse of the implication \eqref{garsia-tesler-implication} 
holds).

For the rest of this section, we study Theorem~\ref{Theorem:  for general shape}, where the partition $\mu$ is a single column. To be more precise, we give two new proofs of the following result.

\begin{thm}\label{thm: tugging symmetry for 1 column}
For nonnegative integers $k$ and $\ell$, we have
\[\widetilde{H}_{\left(k^\ell\right)}[X;q,q^k]=\widetilde{H}_{(k\ell)}[X;q,q^k]=
\sum_{T\in\syt(n)}q^{\operatorname{maj}(T)} s_{\lambda(T)}\]
where $\lambda(T)$ is the shape of the standard tableau $T$.
\end{thm}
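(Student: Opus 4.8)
The statement asserts that the two single-row/single-column stretched shapes $(k^\ell)$ and $(k\ell)$ both evaluate, at $t = q^k$, to the familiar Schur expansion $\sum_{T \in \syt(n)} q^{\maj(T)} s_{\lambda(T)}$, where $n = k\ell$. Since Theorem~\ref{Theorem:  for general shape} (or just the transpose symmetry $\widetilde{H}_\mu[X;q,t] = \widetilde{H}_{\mu'}[X;t,q]$ together with the specialization $t=q^k$) already gives $\widetilde{H}_{(k^\ell)}[X;q,q^k] = \widetilde{H}_{(k\ell)}[X;q,q^k]$, the real content is to identify the single-row polynomial $\widetilde{H}_{(n)}[X;q,t]$ — which does not depend on $t$ — with $\sum_{T} q^{\maj(T)} s_{\lambda(T)}$, and then observe that $\widetilde{H}_{(k\ell)}$ is exactly $\widetilde{H}_{(n)}$ with $n = k\ell$. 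Wait — more care is needed: $(k\ell)$ here denotes the one-row partition with single part $k\ell$, so indeed $\widetilde{H}_{(k\ell)}[X;q,q^k] = \widetilde{H}_{(n)}[X;q] $ has no $t$-dependence at all. So the crux is the classical identity $\widetilde{H}_{(n)}[X;q] = \sum_{T \in \syt(n)} q^{\maj(T)} s_{\lambda(T)}$.

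The plan is to prove this via the HHL combinatorial formula specialized to a one-row shape (equivalently, via Theorem~\ref{thm: LLT expansion of Mac}, where for a one-row $\mu$ the only subset $D$ is empty, so $\widetilde{H}_{(n)}[X;q] = \llt_{\bmR_{(n)}(\emptyset)}[X;q] = \llt_{(C_1, C_1, \dots, C_1)}[X;q]$ — a tuple of $n$ single cells all of content $1$, i.e.\ all on the same diagonal). First I would recall that in this degenerate case the inversion statistic on $\ssyt((C_1,\dots,C_1))$ counts pairs $i < j$ with $T^{(i)}(u) > T^{(j)}(u)$ among cells of equal content, i.e.\ it is literally the number of descents in the word read from the tuple; more precisely, a tuple of fillings of $n$ same-content cells is just a word $w = w_1 \cdots w_n$ in positive integers, $X^{\bmT} = x^w$, and $\inv(\bmT) = \#\{i < j : w_i > w_j \text{ with } c = c\} = \des$-type count — actually it is the comajor/charge-type statistic. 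I would then invoke the standard fact (e.g.\ from HHL, or the quasisymmetric-to-Schur argument) that $\sum_{w} q^{\text{(this statistic)}} x^w = \sum_{T \in \syt(n)} q^{\maj(T)} s_{\lambda(T)}$, which follows from Gessel's theory of $P$-partitions / the fundamental quasisymmetric expansion: group words by standardization, each standardization $\sigma \in S_n$ contributes $q^{\maj(\sigma^{-1})}$ (or $\maj(\sigma)$ after the appropriate inverse) times $F_{\des(\sigma)}$, and RSK/the standard identity $\sum_{\sigma : \mathrm{sh} = \lambda} q^{\maj(\sigma)} = $ (the relevant fake-degree) reorganizes this into $\sum_T q^{\maj(T)} s_{\lambda(T)}$.

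Concretely the key steps, in order, are: (1) reduce $\widetilde{H}_{(k^\ell)}[X;q,q^k] = \widetilde{H}_{(k\ell)}[X;q,q^k]$ to Theorem~\ref{Theorem:  for general shape} with $\mu = (\ell)$ (so $k\mu = (k\ell)$, $k\mu' = (k^\ell)$); (2) observe $\widetilde{H}_{(k\ell)}[X;q,q^k] = \widetilde{H}_{(n)}[X;q]$ is $t$-independent since the one-row shape has no cell outside the first row, so the sum in Theorem~\ref{thm: LLT expansion of Mac} has a single term $\llt_{(C_1,\ldots,C_1)}[X;q]$ with $n$ cells; (3) identify this unicellular LLT polynomial on $n$ equal-content cells with the generating function $\sum_{w \in \mathbb{Z}_{>0}^n} q^{\inv(w)} x^w$ where $\inv(w) = \#\{i<j : w_i > w_j\}$ (the "coinversion = inv on a flat tuple" bookkeeping); (4) standardize and apply Gessel's fundamental quasisymmetric expansion to get $\sum_{\sigma \in S_n} q^{\inv(\sigma)} F_{\mathrm{iDes}(\sigma)}$, then convert $\inv$ to $\maj$ via a Foata-type equidistribution on each descent class and resum over RSK to reach $\sum_{T \in \syt(n)} q^{\maj(T)} s_{\lambda(T)}$. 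The main obstacle I expect is step (4): pinning down exactly which Mahonian statistic the flattened LLT inversion count becomes and verifying it is jointly equidistributed with $(\maj, \mathrm{Des})$ on each quasisymmetric class — the bijective bookkeeping between "inv on words of content $1$ in a reversed/forward tuple" and "$\maj$ of standard tableaux" is where the sign and reading-order conventions of the HHL setup must be tracked carefully; alternatively one sidesteps this entirely by quoting the well-known evaluation $\widetilde{H}_{(n)}[X;q] = \sum_{T \in \syt(n)} q^{\maj(T)} s_{\lambda(T)}$ directly from \cite{HHL05}, in which case the only real work left is steps (1)–(2).
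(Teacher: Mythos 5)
Your argument is valid, but it follows a genuinely different (and much shorter) route than the paper. You reduce everything to Theorem~\ref{Theorem:  for general shape} applied with $\mu=(\ell)$ (so $k\mu=(k\ell)$ and $k\mu'=(k^\ell)$), which gives the first equality outright, and then to the classical evaluation of the one-row polynomial: $\widetilde{H}_{(n)}[X;q,t]=\llt_{(C_1,\dots,C_1)}[X;q]=\sum_w q^{\inv(w)}x^w=\sum_T q^{\maj(T)}s_{\lambda(T)}$ via Gessel's quasisymmetric expansion plus Foata/RSK, or simply by citation to \cite{HHL05}. This is precisely the ``already known'' derivation the paper sets aside when it introduces Theorem~\ref{thm: tugging symmetry for 1 column} as a special case of Theorem~\ref{Theorem:  for general shape} and announces two \emph{new} proofs. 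The paper's first proof (Subsection~\ref{combinatorial-proof}) never touches Garsia--Tesler: it starts from the LLT expansion of $\widetilde{H}_{(k^\ell)}$ and inductively collapses the rectangle row by row using the LLT-equivalences of Propositions~\ref{Prop: horizontal + q^a vertical}--\ref{Prop: commuting dominoes}; the payoff is that it exhibits the quotient $\bigl(\widetilde{H}_{(k\ell)}-\widetilde{H}_{(k^\ell)}\bigr)/(q^k-t)$ as LLT-positive, hence Schur positive, which your argument cannot see. The second proof (Subsection~\ref{representation-theory-proof}) shows that the substitution $y_i\mapsto x_i^k$ induces a graded $\mathfrak{S}_n$-module isomorphism $V_{(k^\ell)}\cong R_n$ (Proposition~\ref{phi-is-isomorphism}), a module-level explanation. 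So your route buys brevity at the cost of relying on the Garsia--Tesler result as a black box and losing both the positivity statement and the representation-theoretic content. One concrete correction: your parenthetical claim that the transpose symmetry $\widetilde{H}_\mu[X;q,t]=\widetilde{H}_{\mu'}[X;t,q]$ together with $t=q^k$ already yields the first equality is false --- transposing $(k^\ell)$ gives $\widetilde{H}_{(\ell^k)}[X;q^k,q]$, not $\widetilde{H}_{(k\ell)}[X;q,q^k]$; that identity is Corollary~\ref{cor:flipped-one-row}, which the paper deduces \emph{from} Theorem~\ref{thm: tugging symmetry for 1 column}, so you genuinely need Theorem~\ref{Theorem:  for general shape} (or one of the paper's two arguments) for that step. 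Your steps (2)--(4) are fine; note only that the statistic in step (3) is literally $\inv(w)=\#\{i<j:w_i>w_j\}$ (not a descent or charge count), and its equidistribution with $\maj$ jointly with the inverse descent set is the standard Foata fact you cite.
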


The final equality in Theorem~\ref{thm: tugging symmetry for 1 column} holds because
$H_{(n)}[X;q,t] = \sum_{T\in\syt(n)} q^{\operatorname{maj}(T)} s_{\lambda(T)}$ 
is the (singly) graded Frobenius image of the coinvariant ring attached to the symmetric group $\mathfrak{S}_n$;
see Subsection~\ref{representation-theory-proof} for more details. 
We prove Theorem~\ref{thm: tugging symmetry for 1 column} in two ways:
a combinatorial argument using LLT-equivalences and an algebraic argument involving Garsia-Haiman
modules.

\subsection{Combinatorial proof of Theorem~\ref{thm: tugging symmetry for 1 column}}
\label{combinatorial-proof}
Let $\mu=\left(k^\ell\right)$. By Theorem~\ref{thm: LLT expansion of Mac}, we have an LLT expansion of the modified Macdonald polynomial of $\mu$ at $t=q^k$ by
\begin{equation}\label{Equation: LLT sum of Macdonald of rectangle}
    \widetilde{H}_{\mu}(x;q,q^k)=\sum_{D\subseteq \{(i,j)\in\mu:1<i\}} q^{k\maj(D)-\operatorname{arm}(D)} \llt_{\bmR_{\mu}(D)}[X;q].
\end{equation}
Note that for the cells in $\{(i,j): 1<i\le \ell, 1\le j \le k\}$, the $q$-statistics
\[
k\maj-\arm
\]
in \eqref{Equation: LLT sum of Macdonald of rectangle} are given by $1,2, \dots, k$ for the cells in the top row, $k+1, k+2, \dots, 2k$ for the cells in the second top row, and so on.

Choose a subset $E\subseteq \{(i,j)\in\mu: 1<i<\ell\}$ consisting of cells not in the first and the last row and take a partial sum of the right-hand-side of \eqref{Equation: LLT sum of Macdonald of rectangle} over subsets $D\subseteq \{(i,j)\in\mu: 1<i\}$, where $D$ restricted to $\{(i,j)\in\mu: 1<i<\ell\}$ equals to $E$. This partial sum gives $q^{k\maj(E)-\arm(E)}$ times the following sum
\begin{equation}\label{Equation: the first partial sum}
    \sum_{D}q^{\sum_{(\ell,j)\in D}j} \llt_{\bmR_\mu(D)}[X;q],
\end{equation}
where the sum is over subsets $D$ whose restriction to $\{(i,j)\in\mu: 1<i<\ell\}$ equals to $E$.

We claim that the summation in \eqref{Equation: the first partial sum} is LLT equivalent with a \emph{single} tuple of ribbons:
\begin{equation}\label{Eq: sum over D|=E}
\sum_{D}q^{\sum_{(\ell,j)\in D}j} \bmR_{\mu}(D)=
{\left(\bmR_{\left(k^{\ell-1}\right)}(E), C_\ell^k\right)},
\end{equation}
where the sum in the left-hand-side is over subsets $D$ whose restriction to $\{(i,j)\in\mu: 1<i<\ell\}$ equals to $E$. 
We prove this claim by induction on $k$. For the initial case, assume $k=1$. By Proposition~\ref{Prop: horizontal + q^a vertical} for $\alpha=1$, we have 
\[
\bmR_{(1^\ell)}(E) + q 
\bmR_{(1^\ell)}\left(E\cup\{\ell\}\right)=
\left(R_{[\ell-1]}\left(E\right), C_\ell\right),
\]
which proves the claim for $k=1$. 

Assume $k>1$. Then we have
\begin{align*}
    &\sum_{D}q^{\sum_{j:(\ell,j)\in D}j} \bmR_{\mu}(D)\\
    &=\left(\bmR_{\left((k-1)^{\ell-1}\right)}\left(\{(i,j)\in E: 1\le i \le k-1\}\right), C_\ell^{k-1},
    R_{[\ell]}(E^{(k)})\right)\\
    &\qquad\qquad\qquad+q^k\left(\bmR_{\left((k-1)^{\ell-1}\right)}\left(\{(i,j)\in E: 1\le i \le k-1\}\right), C_\ell^{k-1}, R_{[\ell]}(E^{(k)}\cup \{\ell\})\right)\\
    &=[k]_q\left(\bmR_{\left((k-1)^{\ell-1}\right)}\left(\{(i,j)\in E: 1\le i \le k-1\}\right), C_\ell^{k-1}, R_{[\ell-1]}(E^{(k)}), C_\ell\right)\\
    &\qquad\qquad\qquad-q[k-1]_q\left(\bmR_{\left((k-1)^{\ell-1}\right)}\left(\{(i,j)\in E: 1\le i \le k-1\}\right), C_\ell^{k}, R_{[\ell-1]}(E^{(k)})\right)\\
    &=\left(\bmR_{\left(k^{\ell-1}\right)}\left(E\right), C_\ell^k\right).
\end{align*}
The first equation follows by the induction hypothesis. The second equation follows from Proposition~\ref{Prop: horizontal + q^a vertical} and the third equation follows from Proposition~\ref{Prop: linear relation between LLT's}. This proves the claim.

Sliding the cells $C_\ell$'s to the leftmost part of the diagonal of content $\ell-1$ gives \begin{equation}\label{Eq: sliding diagonals}
\left(\bmR_{\left(k^{\ell-1}\right)}\left(E\right), C_\ell^k\right)=\left(C_{\ell-1}^k, \bmR_{\left(k^{\ell-1}\right)}\left(E\right)\right).
\end{equation}
Recall that by Proposition~\ref{Prop: commuting relation between a ribbon and a cell}, we can swap a cell of content $r$ and a ribbon $R$ of content $[r]$ where a weight $q^{-1}$ is attached in the case the last cell of $R$ is a descent. Thus,
\begin{equation}\label{Eq: commuting ribbons and cells}
    \left(C_{\ell-1}^k,\bmR_{\left(k^{\ell-1}\right)}(E)\right)=q^{-k\left|\{j:(\ell-1,j)\in E\}\right|}\left(\bmR_{\left(k^{\ell-1}\right)}(E),C_{\ell-1}^k\right),
\end{equation}
Combining \eqref{Eq: sum over D|=E}, \eqref{Eq: sliding diagonals} and \eqref{Eq: commuting ribbons and cells}, we conclude that
\begin{equation}\label{Eq: H_mu=sum over E}
\widetilde{H}_\mu[X;q,q^k]=\sum_{E}q^{k \maj(E)-\arm(E)-k|\{j:(\ell-1,j)\in E\}|}\llt_{\left(\bmR_{(k^{\ell-1})}(E),C^k_{\ell-1}\right)}[X;q],
\end{equation}
where the sum is over all subsets $E\subseteq \{(i,j)\in \mu:1<i<\ell\}$. Note that each cell in the top row in $E$ contributes to the $q$-weight 
\[
    {k\maj(E)-\arm(E)-k\left|\{j:(\ell-1,j)\in E\}\right|}
\]
in \eqref{Eq: H_mu=sum over E} by $1,2, \dots,k$ as in the initial case. Therefore we can apply the whole procedure again to obtain
\begin{equation*}
\widetilde{H}_\mu[X;q,q^k]=\sum_{E}q^{k \maj(E)-\arm(E)-2k|\{j:(\ell-2,j)\in E\}|}\llt_{\left(\bmR_{(k^{\ell-2})}(E),C^{2k}_{\ell-1}\right)}[X;q],
\end{equation*}
where the sum is over all $E\subseteq \{(i,j)\in \mu:1<i<\ell-1\}$. By applying this repeatedly, we prove Theorem~\ref{thm: tugging symmetry for 1 column}.

\subsection{Representation-theoretic proof of Theorem~\ref{thm: tugging symmetry for 1 column}}
\label{representation-theory-proof}
For any $\mu \vdash n$, Haiman established \cite{Hai01} the Schur positivity of $\widetilde{H}_{\mu}[X;q,t]$ 
by proving
\begin{equation}
\mathrm{grFrob}(V_{\mu};q,t) = \widetilde{H}_{\mu}[X;q,t]
\end{equation}
where $V_{\mu}$ is the {\em Garsia-Haiman module} \cite{GH93} attached to $\mu$.
The module $V_{\mu}$ is the following subspace
of $\mathbb{C}[X_n, Y_n] := \mathbb{C}[x_1, \dots, x_n, y_1, \dots, y_n]$.  
Fix a bijective labeling $T$ of the boxes of $\mu$ with $1, 2, \dots, n$
and define a polynomial $\delta_{\mu} \in \mathbb{C}[X_n, Y_n]$ by
\begin{equation}
\delta_{\mu} := \varepsilon_n\cdot
 \prod_{c \in \mu} x^{\mathrm{coarm}(c)}_{T(c)} y^{\mathrm{coleg}(c)}_{T(c)}
\end{equation}
where $\mathfrak{S}_n$ acts on $\mathbb{C}[X_n, Y_n]$ diagonally and 
$\varepsilon_n := \sum_{w \in \mathfrak{S}_n} \mathrm{sign}(w) w$ is the antisymmetrizing idempotent.
The Garsia-Haiman module $V_{\mu}$ is the smallest linear subspace of $\mathbb{C}[X_n, Y_n]$ containing $\delta_{\mu}$
which is closed under the partial differentiation operators 
$\partial/\partial x_i$ and $\partial / \partial y_i$ for $1 \leq i \leq n$.
In particular, when $\mu \vdash n$ is a single row or column, we have an isomorphism of singly-graded
$\mathfrak{S}_n$-modules
$V_{\mu} \cong R_n$, where $R_n := \mathbb{C}[X_n]/\langle \mathbb{C}[X_n]^{\mathfrak{S}_n}_+ \rangle$ is the type A coinvariant 
algebra in the $x$-variables.
  
Let $\eta: \mathbb{C}[X_n, Y_n] \twoheadrightarrow \mathbb{C}[X_n]$ be the evaluation
map that fixes $x_i$ and specializes $y_i \mapsto (x_i)^k$. 
For any $\mu \vdash n$, we have an $\mathfrak{S}_n$-module homomorphism
$\varphi_{\mu}: V_{\mu} \rightarrow R_n$ given by the composition
\begin{equation}
\varphi_{\mu}: V_{\mu} \hookrightarrow \mathbb{C}[X_n, Y_n] 
\xrightarrow{ \, \, \eta \, \, } \mathbb{C}[X_n] \twoheadrightarrow R_n
\end{equation}
of including $V_{\mu}$ into $\mathbb{C}[X_n, Y_n]$, evaluating along $\eta$, and then projecting 
onto $R_n$.

\begin{proposition}
\label{phi-is-isomorphism}
If $n = k \ell$ and $\mu = (k^\ell)$ is a rectangle, then $\varphi_{\mu}$ is an isomorphism.
\end{proposition}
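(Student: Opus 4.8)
The plan is to verify that $\varphi_{\mu}$ is an isomorphism by a dimension count together with surjectivity. Both $V_{\mu}$ and $R_n$ have dimension $n!$ as ungraded vector spaces: for $R_n$ this is the classical fact about the coinvariant algebra, and for $V_{\mu}$ this is the $n!$-theorem of Haiman \cite{Hai01}. Since $\varphi_{\mu}$ is an $\mathfrak{S}_n$-module map between spaces of the same finite dimension, it suffices to prove that $\varphi_{\mu}$ is surjective (equivalently injective). For surjectivity, since $R_n$ is generated as an algebra-quotient by the images of $x_1,\dots,x_n$ and, more usefully, is spanned by the descent monomials (or by any spanning set of ``sub-staircase'' monomials $x_1^{a_1}\cdots x_n^{a_n}$ with $a_i < i$ after reordering), it is enough to show that the image of $\varphi_{\mu}$ contains a set of polynomials whose images in $R_n$ span.

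The key step is to produce, inside $V_{\mu}$, explicit elements whose images under $\eta$ and projection to $R_n$ are controlled. I would start from the generator $\delta_{\mu}$ and hit it with partial differentiation operators $\partial/\partial x_i$, $\partial/\partial y_i$ to generate $V_{\mu}$, then track what happens after setting $y_i \mapsto x_i^k$. The point of the rectangle hypothesis $\mu = (k^\ell)$ is that the exponents appearing in $\delta_{\mu} = \varepsilon_n \cdot \prod_{c}x_{T(c)}^{\coarm(c)} y_{T(c)}^{\coleg(c)}$ are, columnwise, $\coarm(c) \in \{0,1,\dots,k-1\}$ and $\coleg(c)\in\{0,1,\dots,\ell-1\}$; after the substitution $y_i\mapsto x_i^k$ the combined $x$-exponent of the box in column position $a$ (so $\coarm = a-1$) and row position $b$ (so $\coleg = b-1$) becomes $(a-1) + k(b-1)$, and as $(a,b)$ ranges over the $k \times \ell$ rectangle this runs over $\{0,1,\dots,n-1\}$ bijectively. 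Hence $\eta(\delta_{\mu})$ is, up to sign, the Vandermonde-type antisymmetrization $\varepsilon_n \cdot \prod_i x_i^{\sigma(i)}$ of the exponent vector $(0,1,\dots,n-1)$ in some order, which is exactly (a nonzero scalar times) the Vandermonde determinant $\prod_{i<j}(x_i - x_j)$ — the generator of the sign-isotypic lowest piece, and the standard generator of $R_n$ as a module over the differential operators. From this one deduces $\varphi_{\mu}(V_{\mu}) \supseteq$ the span of all $\partial$-derivatives of the Vandermonde, which is all of $R_n$.

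I would organize the argument as: (1) recall $\dim V_{\mu} = \dim R_n = n!$; (2) show $\varphi_{\mu}$ is a graded $\mathfrak{S}_n$-map, being careful that $\eta$ is only a map of $\mathfrak{S}_n$-modules (not of bigraded spaces) but does respect a single grading by total $x$-degree once we declare $\deg y_i = k$; (3) compute $\eta(\delta_{\mu})$ explicitly using the rectangle combinatorics above to identify it with a scalar multiple of the Vandermonde $\Delta_n = \prod_{i<j}(x_i-x_j)$ modulo lower-degree terms that vanish in $R_n$ — or note that $\Delta_n$ already has the top degree $\binom{n}{2}$ among sign-isotypic elements, so the image is exactly $\Delta_n$ up to scalar; (4) invoke the classical fact that $R_n$ is generated as a module over $\mathbb{C}[\partial x_1,\dots,\partial x_n]$ by $\Delta_n$ (equivalently, $R_n$ is the space of ``harmonics''), together with the fact that $\varphi_{\mu}$ intertwines the $\partial/\partial x_i$ action on $V_{\mu}$ with the $\partial/\partial x_i$ action on $R_n$, to conclude $\varphi_{\mu}$ is surjective; (5) conclude it is an isomorphism by equality of dimensions.

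The main obstacle I anticipate is step (3)–(4): one must be sure that the $\partial/\partial x_i$-action is honestly carried through $\eta$. The subtlety is that $\eta$ does \emph{not} commute with $\partial/\partial y_i$, so $V_{\mu}$ being $\partial_y$-closed does not directly say anything after evaluation; what survives is that $\eta \circ \partial/\partial x_i = \partial/\partial x_i \circ \eta$ fails too, because $\partial/\partial x_i$ acts on the substituted $y_i = x_i^k$. The clean fix is to not differentiate after substituting: instead argue that $V_{\mu}$ already contains preimages of everything needed. Concretely, $V_{\mu}$ contains $\delta_{\mu}$ and $R_n$ is spanned by derivatives of $\Delta_n$; one lifts each such derivative $\partial^{\alpha}\Delta_n$ to an element of $V_{\mu}$ by first writing it as $\eta$ of an appropriate $\partial_x$-derivative of $\delta_\mu$ \emph{before} any substitution — here one uses that modulo terms killed by $\eta$-then-project, $\delta_\mu$ and $\Delta_n$ agree and the $\partial_x$-actions are compatible because $\partial/\partial x_i$ on the unsubstituted polynomial, followed by $\eta$, lands in $R_n$ correctly up to the ideal. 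Making this bookkeeping precise — isolating exactly which ``error terms'' lie in the defining ideal of $R_n$ and hence are harmless — is the delicate part; everything else is either classical or a direct consequence of Haiman's $n!$-theorem.
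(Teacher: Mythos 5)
Your frame agrees with the paper's: both $V_{\mu}$ and $R_n$ have dimension $n!$ (Haiman's $n!$ theorem and the classical coinvariant count), so surjectivity suffices, and your computation that $\eta(\delta_{\mu})$ is the antisymmetrization of $x_1^0x_2^1\cdots x_n^{n-1}$ — because $(a-1)+k(b-1)$ runs bijectively over $\{0,\dots,n-1\}$ on the $k\times\ell$ rectangle — is exactly the paper's first step. The gap is at the decisive point, and you have correctly diagnosed it yourself: by the chain rule, $\partial/\partial x_i\circ\eta=\eta\circ\partial/\partial x_i+kx_i^{k-1}\,\eta\circ\partial/\partial y_i$, so $\eta(V_{\mu})$ is not closed under $\partial/\partial x_i$, and knowing only that $\Delta_n\in\eta(V_{\mu})$ does not let you invoke ``$R_n$ is spanned by $\partial_x$-derivatives of $\Delta_n$.'' Your proposed repair — produce preimages in $V_{\mu}$ before substituting — is the right instinct, but you leave the ``bookkeeping'' unresolved, and that bookkeeping is the entire remaining content of the proof.

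The missing construction is this. Fix the lex term order with $x_1<\cdots<x_n$; the sub-staircase monomials $x_1^{a_1}\cdots x_n^{a_n}$ with $a_i\le i-1$ form the standard monomial basis of $R_n$ for this order, so it suffices to exhibit, for each such exponent vector, an element of $\eta(V_{\mu})$ with that leading monomial. Write $(i-1)-a_i=q_ik+r_i$ with $0\le r_i<k$ by the Euclidean algorithm, and apply $(\partial/\partial x_1)^{r_1}(\partial/\partial y_1)^{q_1}\cdots(\partial/\partial x_n)^{r_n}(\partial/\partial y_n)^{q_n}$ to $\delta_{\mu}$ \emph{inside} $V_{\mu}$ (legal, since $V_{\mu}$ is closed under all of these operators); applying $\eta$ afterwards lowers the combined $x_i$-exponent by $r_i+kq_i=(i-1)-a_i$, so the image has leading monomial $x_1^{a_1}\cdots x_n^{a_n}$. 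This base-$k$ trade of $x$-derivatives for $y$-derivatives, combined with a leading-term argument against the standard monomial basis, is what replaces the intertwining you wanted; in particular there is no need to identify ``error terms lying in the defining ideal,'' because the leading-monomial argument sidesteps the ideal entirely. Without this (or an equivalent) construction, your proposal does not yet establish surjectivity.
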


Since the (singly) graded Frobenius image of $R_n$ is
\begin{equation}
\mathrm{grFrob}(R_n; q) = \sum_{\lambda \vdash n} \left( \sum_{T \in \mathrm{SYT}(\lambda)} q^{\maj(T)} \right)
\cdot s_{\lambda}[X]
\end{equation}
and $\eta$ evaluates the $y$-variables to degree $k$,
Proposition~\ref{phi-is-isomorphism} implies 
Theorem~\ref{thm: tugging symmetry for 1 column}.  
We prove Proposition~\ref{phi-is-isomorphism} as follows.

\begin{proof}
The domain and codomain of $\varphi_{\mu}$ are both vector spaces of dimension $n!$, so
it is enough to show that the image of $\varphi_{\mu}$ spans $R_n$.  We choose our filling $T$
of the $k$-by-$\ell$ rectangle $\mu$ so that 
\begin{equation}
\delta_{\mu} = \varepsilon_n \cdot \left( \prod_{\substack{0 \leq i \leq k-1 \\ 0 \leq j \leq \ell-1}} 
x_{\ell j+i+1}^i y_{\ell j+i+1}^j \right).
\end{equation}
This corresponds to the `English reading order' standard filling of $\mu$.
The evaluation $\eta(\delta_{\mu})$ of the Vandermonde determinant is the image in $R_n$ of the Vandermonde, i.e., 
\begin{equation} \eta(\delta_{\mu}) = 
\varepsilon_n  \cdot ( x_1^0 x_2^1 \cdots x_n^{n-1} ).
\end{equation}
If we endow monomials in $x_1, \dots, x_n$ with the lex term order {\bf with underlying 
variable order $x_1 < \cdots < x_n$} the leading monomial of $\eta(\delta_{\mu})$ is
$ x_1^0 x_2^1 \cdots x_n^{n-1} $.  We show that any exponent sequence
$(a_1, \dots, a_n)$ with $a_i < i$ is the leading monomial of some polynomial in $\eta(V_{\mu})$.
Since such monomials constitute the standard basis of $R_n$ with respect to the aforementioned term
order, this completes the proof.

Suppose we have a componentwise inequality $(a_1, a_2, \dots, a_n) \leq (0, 1, \dots, n-1)$.
We apply the Euclidean algorithm to any difference $(i-1) - a_i$ to write
$$
(i-1) - a_i = q_i m + r_i,
$$
where $q_i \geq 0$ and $0 \leq r_i < m$.  We have an element of $V_{\mu}$ given by
\begin{equation}
\label{a-element}
(\partial / \partial x_1)^{r_1}  (\partial / \partial y_1)^{q_1}  \cdots 
(\partial / \partial x_n)^{r_n}  (\partial / \partial y_n)^{q_n} (\delta_{\mu}).
\end{equation}
The image of \eqref{a-element} under $\eta$ has leading monomial
$x_1^{a_1} \cdots x_n^{a_n}$, and the argument in the last paragraph completes the proof.
\end{proof} 

If $\mu \vdash n$ is not a rectangle, the restriction of $\eta$ to $V_{\mu}$ is not injective,
so the above argument does not go through.

\section{Concluding remarks}
\label{Sec: Conclusion}

\subsection{Schur positivity}
Theorem~\ref{thm: tugging symmetry for 1 column} is equivalent to the assertion that
\[
\widetilde{H}_{(k\ell)}[X;q,t]-\widetilde{H}_{(k^\ell)}[X;q,t]
\]
is divisible by $(q^k-t)$. The proof in Section~\ref{Sec: A combinatorial proof} does not only show the equality of the modified Macdonald polynomials at $t=q^k$ in Theorem~\ref{thm: tugging symmetry for 1 column} but also proves the LLT positivity, thus Schur positivity of the quotient
\[
\dfrac{\widetilde{H}_{(k\ell)}[X;q,t]-\widetilde{H}_{(k^\ell)}[X;q,t]}{q^k-t}.
\]

For any partition $\mu$,
Theorem~\ref{Theorem:  for general shape} implies that the rational function 
\begin{equation}
\label{eqn:macdonald-quotient}
    \dfrac{\widetilde{H}_{k\mu}[X;q,t]-\widetilde{H}_{k\mu'}[X;q,t]}{q^k-t}
\end{equation}
is a polynomial in $q,t$, and $X$. Surprisingly, some (but not all) of the quotients
\eqref{eqn:macdonald-quotient}
are Schur positive. In particular, SAGE computations suggest that if each cell $c=(i,j)$ of $\mu$ satisfies either
\begin{enumerate}
    \item $c$ is also contained in $\mu'$, or
    \item $c$ is under the main diagonal, i.e. $i<j$,
\end{enumerate}
then \eqref{eqn:macdonald-quotient}
is Schur positive. It is an interesting question to ask for necessary and sufficient conditions for two partitions $\lambda,\mu$, and $k\ge 0$ so that
\eqref{eqn:macdonald-quotient} is Schur positive.

\subsection{Combinatorial formula for Kostka polynomials}

A combinatorial formula for 
$(q,t)$-Kostka polynomials 
is unknown in general, 
and finding one 
is one of the most 
important open problems in algebraic combinatorics. 

We recall that a \emph{standard tableau} of a partition $\lambda\vdash n$ is a semistandard tableau consisting of $1,2, \dots, n$. We denote the set of standard tableaux of shape $\lambda$ by $\syt(\lambda)$. It is well known that $(q,t)$-Kostka polynomial at $(1,1)$ gives
\[
\widetilde{K}_{\lambda,\mu}(1,1)=|\syt(\lambda)|.
\]
Therefore, the most desirable form of a combinatorial formula for $(q,t)$-Kostka polynomials would be given by a generating function for the standard tableaux with two statistics:
\[
    \widetilde{K}_{\lambda,\mu}(q,t)=\sum_{T\in\syt(\lambda)}q^{\operatorname{stat}_q(T)}t^{\operatorname{stat}_t(T)}.
\]

Theorem~\ref{thm: tugging symmetry for 1 column} implies
\[
    \widetilde{K}_{\lambda,\left(k^\ell\right)}(q,q^k)=\sum_{T\in\syt(\lambda)}q^{\maj(T)}.
 \]
This suggests that the modified Macdonald polynomials (or modified $(q,t)$-Kostka polynomials) for rectangle $\mu$ might have more structure. For example, Theorem~\ref{thm: tugging symmetry for 1 column} implies the desirable $(q,t)$-statistics $\operatorname{stat}_q$ and $\operatorname{stat}_t$ such that \[\textrm{stat}_q+k \textrm{stat}_t\] 
and $\maj$ statistics are equidistributed over $\syt(\lambda)$. We hope that this gives a hint to track the $q,t$-statistics for $(q,t)$-Kostka polynomials for rectangles. The $(q,t)$-Kostka polynomials for partitions of 4 are given in the following table.

\begin{table}[h]
     \centering
     \begin{tabular}{|c|c|c|c|c|c|}\hline
      $\mu$ \textbackslash $\lambda$& [4] & [3,1] & [2,2] & [2,1,1] & [1,1,1,1,1] \\\hline
     
      [4] & 1& $q+q^2+q^3$ &  $q^2+q^4$ & $q^3+q^4+q^5$  & $q^6$ \\\hline
      [2,2] & 1&$t+qt+q$ & $t^2+q^2$ & $qt^2+qt+q^2t$ & $q^2t^2$ \\\hline
      [1,1,1,1] & 1 & $t+t^2+t^3$ & $t^2+t^4$ & $t^3+t^4+t^5$ & $t^6$ \\\hline
     \end{tabular}
 \end{table}

\subsection{A common generalization of two main theorems}

There is an intersection between Theorem~\ref{thm: Haglund's conjecture} and Theorem~\ref{thm: tugging symmetry for 1 column}. 
More precisely, 
taking $\mu=(k,k)$ and $h_i=2k$ for all $i$'s in Theorem~\ref{thm: Haglund's conjecture} yields
\begin{equation}\label{Eq:intersection of two theorems}
\widetilde{H}_{(n,n)}[X;q,q^n]=\sum_{T\in\operatorname{SYT}(2n)}q^{\maj(T)}X^T.
\end{equation}
Taking $\mu=(2)$ (or $\mu=(1,1)$) in Theorem~\ref{thm: tugging symmetry for 1 column} also yields \eqref{Eq:intersection of two theorems}. Therefore, it is natural to ask the following question.

\begin{question}
Is there a common generalization of Theorem~\ref{Theorem:  for general shape} and Theorem~\ref{thm: Haglund's conjecture}?
\end{question}

\subsection{A new Mahonian statistic}
For any $k \geq 1$, we define a statistic $\maj_k$
on words $w = w_1 w_2 \dots$ over the positive integers
by
\[
    \maj_{k}(w) := \sum_{0<j-i<k} \chi((i,j)\in \operatorname{Inv}(w)) + \sum_i i \chi((i,i+k)\in \operatorname{Inv}(w))
\] 
where $\operatorname{Inv}(w)$ is the inversion set of 
$w$ and
for a statement $P$, we let $\chi(P) = 1$
if $P$ is true and $\chi(P) = 0$ if $P$ is false.
We recover the classical
major index $\maj_1(w) = \maj(w)$ at $k = 1$.
Theorem~\ref{thm: tugging symmetry for 1 column} and
the formula for modified Macdonald polynomials in 
\cite{HHL05} imply that for any partition
$\mu = (\mu_1, \dots , \mu_k) \vdash n$ and any integer 
$k \geq 1$, we have
\[
    \sum_{w \in W(\mu)} q^{\maj_k(w)} =
    \frac{[n]!_q}{[\mu_1]!_q  \cdots [\mu_k]!_q }
\]
where $W(\mu)$ is the set of words $w = w_1 \dots w_n$
of content $\mu$ and
$[n]!_q := [n]_q [n-1]_q \cdots [1]_q$ is the 
 $q$-analog of $n!$.
In particular, the operators $\maj_k$ for $k \geq 1$
are equidistributed on $W(\mu)$.
Kadell gave \cite{Kad85} a bijective proof of this fact.

\begin{corollary}
\label{cor:flipped-one-row}
For $n=k\ell$, we have 
\[\widetilde{H}_{\left(\ell^k\right)}[X;q^k,q]=\widetilde{H}_{(k\ell)}[X;q,q^k]=\sum_{T\in\syt(n)}q^{\operatorname{maj}(T)}X^T.\]
\end{corollary}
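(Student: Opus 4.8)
The plan is to deduce Corollary~\ref{cor:flipped-one-row} from Theorem~\ref{thm: tugging symmetry for 1 column} by exploiting the elementary $q,t$-symmetry of modified Macdonald polynomials, namely $\widetilde{H}_{\lambda}[X;q,t] = \widetilde{H}_{\lambda'}[X;t,q]$. Applying this with $\lambda = \left(\ell^k\right)$, whose conjugate is $\left(k^\ell\right)$, gives $\widetilde{H}_{\left(\ell^k\right)}[X;q^k,q] = \widetilde{H}_{\left(k^\ell\right)}[X;q,q^k]$. Theorem~\ref{thm: tugging symmetry for 1 column}, applied with the roles of $k$ and $\ell$ as stated there, already identifies $\widetilde{H}_{\left(k^\ell\right)}[X;q,q^k]$ with $\widetilde{H}_{(k\ell)}[X;q,q^k]$ and with $\sum_{T\in\syt(n)} q^{\operatorname{maj}(T)} s_{\lambda(T)}$. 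Chaining these equalities yields exactly the displayed chain in the corollary, where I write the Schur expansion in the monomial form $\sum_{T \in \syt(n)} q^{\operatorname{maj}(T)} X^T$ using $s_{\lambda(T)} = \sum_{S} X^S$ over semistandard tableaux $S$ of shape $\lambda(T)$ (or, more cleanly, by recalling that $\widetilde{H}_{(n)}[X;q,t] = \sum_{T \in \syt(n)} q^{\operatorname{maj}(T)} s_{\lambda(T)}$ and that the right-hand side equals $\sum_{w} q^{\operatorname{maj}(w)} X^w$ summed over all words, via the RSK-type correspondence underlying the coinvariant Frobenius image).

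The one point that deserves care, and the only place an obstacle could hide, is making sure the specialization indices line up: Theorem~\ref{thm: tugging symmetry for 1 column} is symmetric in $k$ and $\ell$ in the sense that $\widetilde{H}_{\left(k^\ell\right)}[X;q,q^k] = \widetilde{H}_{\left(k\ell\right)}[X;q,q^k]$ holds for all nonnegative $k,\ell$, so there is no loss in swapping their names; the genuinely new input here is only the transpose symmetry $\widetilde{H}_{\lambda}[X;q,t] = \widetilde{H}_{\lambda'}[X;t,q]$, which is a standard and classical identity (it is immediate, for instance, from the HHL combinatorial formula, or from Macdonald's original symmetry). I would cite this identity rather than reprove it.

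Concretely, the proof I would write is: \emph{Proof.} By the transpose symmetry $\widetilde{H}_{\lambda}[X;q,t] = \widetilde{H}_{\lambda'}[X;t,q]$ applied to $\lambda = \left(\ell^k\right)$ (so $\lambda' = \left(k^\ell\right)$), we have
\[
\widetilde{H}_{\left(\ell^k\right)}[X;q^k,q] = \widetilde{H}_{\left(k^\ell\right)}[X;q,q^k].
\]
By Theorem~\ref{thm: tugging symmetry for 1 column}, the right-hand side equals $\widetilde{H}_{(k\ell)}[X;q,q^k] = \sum_{T\in\syt(n)} q^{\operatorname{maj}(T)} s_{\lambda(T)}[X]$, and expanding each Schur function into monomials gives $\sum_{T\in\syt(n)} q^{\operatorname{maj}(T)} X^T$ in the notation of Theorem~\ref{thm: tugging symmetry for 1 column}. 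This is the asserted chain of equalities. $\square$

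The main obstacle, such as it is, is purely bookkeeping: one must be careful that the statistic $\operatorname{maj}(T)$ appearing here is unchanged under the transposition, which is automatic since we have routed everything through the already-proven Theorem~\ref{thm: tugging symmetry for 1 column} and only invoked the $q \leftrightarrow t$, $\lambda \leftrightarrow \lambda'$ symmetry at the very first step. Thus no new combinatorial identity for $\syt$ is needed, and the corollary is essentially a one-line consequence of two results already in hand.
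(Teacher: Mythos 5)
Your proof is correct and is essentially identical to the paper's own argument: the paper's proof of Corollary~\ref{cor:flipped-one-row} likewise consists of applying Theorem~\ref{thm: tugging symmetry for 1 column} together with the transpose symmetry $\widetilde{H}_\mu[X;q,t]=\widetilde{H}_{\mu'}[X;t,q]$ (offering the representation-theoretic argument of Subsection~\ref{representation-theory-proof} only as an alternative). Your extra care about the $\operatorname{maj}$ bookkeeping and the Schur-versus-monomial form of the right-hand side is sound but not a departure from the paper's route.
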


\begin{proof}
Apply Theorem~\ref{thm: tugging symmetry for 1 column}
and either the $q,t$-symmetry
$\widetilde{H}_\mu[X;q,t] =
\widetilde{H}_{\mu'}[X;t,q]$
or the representation-theoretic argument in
Subsection~\ref{representation-theory-proof}.
\end{proof}

We define a new statistic $\maj'_\ell$ for words 
$w=w_1 \dots w_n$ by
\[
    \maj'_\ell(w)=\sum_{i} \left\lceil\dfrac{i}{\ell}\right\rceil \chi((i,i+\ell)\in \operatorname{Inv}(w))
\]
Corollary~\ref{cor:flipped-one-row} and the 
combinatorial formula for $\widetilde{H}_\mu[X;q,t]$
in \cite{HHL05} implies that the composite statistic
\[
k\maj_\ell - (n-1)\maj'_\ell
\]
is equidistributed with the major index on words. 
That is, for any partition $\mu$ of $n$,
\begin{equation}\label{Eq: equidistribution inv_k and maj}
    \sum_{w\in W(\mu)}q^{k\maj_\ell(w) - (n-1)\maj'_\ell(w)}= \frac{[n]!_q}{[\mu_1]!_q  \cdots [\mu_k]!_q }.
\end{equation}

\begin{question}
Is there a bijective proof of \eqref{Eq: equidistribution inv_k and maj}?
\end{question}

\section*{acknowledgement}
The authors would like to thank James Haglund for telling us about his conjecture. The authors also thank Mark Haiman for pointing out that Theorem~\ref{Theorem:  for general shape} follows from the result of Garsia--Tesler. The authors are also grateful to Donghyun Kim for helpful conversations.
S. J. Lee was supported by the National Research Foundation of Korea (NRF) grant funded by the Korean government (MSIT) (No. 2019R1C1C1003473). B. Rhoades was partially supported by NSF Grant DMS-1953781. J. Oh was supported by KIAS Individual Grant (CG083401) at  Korea Institute for Advanced Study.

\bibliographystyle{alpha}
\bibliography{Macdonald}

\end{document}